\numberwithin{equation}{section}
\renewcommand{\subsection}{\@startsection
{subsection}{2}{0mm}{\baselineskip}{-0.25cm}
{\normalfont\normalsize\bf}}
\newtheorem{theorem}{Theorem}[section]
\newtheorem{proposition}[theorem]{Proposition}
\newtheorem{lemma}[theorem]{Lemma}
\newtheorem{corollary}[theorem]{Corollary}
\theoremstyle{definition}
   \theoremstyle{remark}
\newcommand{\F}{{\mathbb F}}
\newcommand{\PG}{{\rm PG}}
\begin{document}

\author[]{}

\author[D. Bartoli]{Daniele Bartoli}
\address{
Dipartimento di Matematica e Informatica, Universit\`a degli Studi di Perugia}
	\email{daniele.bartoli@unipg.it}

\author[M. Montanucci]{Maria Montanucci}
\address{Department of Applied Mathematics and Computer Science, Technical University of Denmark, Asmussens All\'e, building 303B, DK‐2800 Kongens Lyngby, Denmark}
	\email{marimo@dtu.dk}

\title{Towards the full classification of exceptional scattered polynomials}

\maketitle


\begin{abstract}
Let $f(X) \in \mathbb{F}_{q^r}[X]$ be a $q$-polynomial. If the $\mathbb{F}_q$-subspace $U=\{(x^{q^t},f(x)) \mid x \in \mathbb{F}_{q^n}\}$ defines a maximum scattered linear set, then we call $f(X)$ a scattered polynomial of index $t$.  The asymptotic behaviour of scattered polynomials of index $t$ is an interesting open problem. In this sense, exceptional scattered polynomials of index $t$ are those for which $U$ is a maximum scattered linear set in ${\rm PG}(1,q^{mr})$ for infinitely many $m$. The complete classifications of exceptional scattered monic polynomials of index $0$ (for $q>5$) and of index 1 were obtained in \cite{BZ}. In this paper we complete the classifications of exceptional scattered monic polynomials of index $0$ for $q \leq 4$. Also, some partial classifications are obtained for arbitrary $t$. As a consequence, the complete classification of exceptional scattered monic polynomials of index $2$ is given.
\end{abstract}

\vspace{0.5cm}\noindent {\em Keywords}:
maximum scattered linear set; MRD code; algebraic curve; Hasse-Weil bound.
\vspace{0.2cm}\noindent

\section{Introduction}
Let $q$ be a prime power and $r,n\in \mathbb{N}$. Let $V$ be a vector space of dimension $r$ over $\mathbb{F}_{q^n}$. For any $k$-dimensional $\mathbb{F}_q$-vector subspace $U$ of $V$, the set $L(U)$ defined by the nonzero vectors of $U$ is called an $\mathbb{F}_q$-\emph{linear set} of $\Lambda=\PG(V, q^n)$ of \emph{rank} $k$, i.e.
\[ L(U)=\{\langle {\mathbf u} \rangle_{\mathbb{F}_{q^n}}: {\mathbf u} \in U\setminus \{\mathbf{0} \}  \}.\]
It is notable that the same linear set can be defined by different vector subspaces. Consequently, we always consider a linear set and the vector subspace defining it in pair. 

Let $\Omega=\PG(W,\mathbb{F}_{q^n})$ be a subspace of $\Lambda$ and $L(U)$ an $\mathbb{F}_q$-linear set of $\Lambda$. We say that $\Omega$ has \emph{weight} $i$ in $L(U)$ if $\dim_{\mathbb{F}_q}(W\cap U)=i$. Thus a point of $\Lambda$ belongs to $L(U)$ if and only if it has weight at least $1$. Moreover, for any $\mathbb{F}_q$-linear set $L(U)$ of rank $k$, 
\[|L(U)|\leq \frac{q^{k}-1}{q-1}.\]
When the equality holds, i.e.\ all the points of $L(U)$ have weight $1$, we call $L(U)$ a \emph{scattered} linear set. A scattered $\mathbb{F}_q$-linear set of highest possible rank is called a \emph{maximum scattered} $\mathbb{F}_q$-\emph{linear set}. See \cite{blokhuis_scattered_2000} for the possible ranks of maximum scattered linear sets.

Maximum scattered linear sets have various applications in Galois geometry, including blocking sets \cite{ball_linear_2000,lunardon_linear_k-blocking_2001,lunardon_blocking_2000}, two-intersection sets \cite{blokhuis_scattered_2000,blokhuis_two-intersection_2002}, finite semifields \cite{cardinali_semifield_2006,ebert_infinite_2009,lunardon_maximum_scattered_2014,marino_towards_2011}, translation caps \cite{bartoli_maximum_2017}, translation hyperovals \cite{durante_hyperovals_2017}, etc. For more applications and related topics, see \cite{polverino_linear_2010} and the references therein. For recent surveys on linear sets and particularly on the theory of scattered spaces, see \cite{lavrauw_scattered_2016,lavrauw_field_reduction_2015}.

In this paper, we are interested in maximum scattered linear sets in $\PG(1, q^n)$. Let $f$ be an $\mathbb{F}_q$-linear function over $\mathbb{F}_{q^n}$ and
\begin{equation}\label{eq:U_(x,f)}
	U= \{ (x, f(x)) : x\in \mathbb{F}_{q^n}  \}.
\end{equation}
Clearly $U$ is an $n$-dimensional $\mathbb{F}_q$-subspace of $\mathbb{F}_{q^n}$ and $f$ can be written as a $q$-polynomial $f(X)=\sum a_i X^{q^i} \in \mathbb{F}_{q^n}[X]$. It is not difficult to show that a necessary and sufficient condition for $L(U)$ to define a maximum scattered linear set in $\PG(1, q^n)$ is
\begin{equation}\label{eq:scattered_poly}
	\frac{f(x)}{x} = \frac{f(y)}{y} ~\text{ if and only if }~ \frac{y}{x}\in \mathbb{F}_q, \quad\text{ for }x,y\in \mathbb{F}_{q^n}^*.
\end{equation}
In \cite{sheekey_new_2016}, such a $q$-polynomial is called a \emph{scattered polynomial}.

Two linear sets $L(U)$ and $L(U')$ in $\PG(2,q^n)$ are \emph{equivalent} if there exists an element of $\mathrm{P\Gamma L}(2, q^n)$ mapping $L(U)$ to $L(U')$. It is obvious that if $U$ and $U'$ are equivalent as $\mathbb{F}_{q^n}$-spaces, then $L(U)$ and $L(U')$ are equivalent. However, the converse is not true in general. For recent results on the equivalence issue and the classification of linear sets, we refer to \cite{csajbok_classes_2017,csajbok_maximum_2017,csajbok_equivalence_2016}.

There is a very interesting link between maximum scattered linear sets and the so called maximum rank distance (MRD for short) codes \cite{csajbok_maximum_2017}. In particular, a scattered polynomial over $\mathbb{F}_{q^n}$ defines an MRD code in $\mathbb{F}_q^{n\times n}$ of minimum distance $n-1$; see \cite{BZ} for more details.

Given a scattered polynomial $f$ over $\mathbb{F}_{q^n}$, an MRD code can be defined by the following set of $\mathbb{F}_q$-linear maps
\begin{equation}\label{eq:MRD_scattered}
	 C_f :=\{ax+bf(x) : a, b\in \mathbb{F}_{q^n}\}. 
\end{equation}
To show that \eqref{eq:MRD_scattered} defines an MRD code, we only have to prove that $ax+bf(x)$ has at most $q$ roots for each $a,b\in \mathbb{F}_{q^n}$ with $ab\neq 0$, which is equivalent to \eqref{eq:scattered_poly}. 

It is worth pointing out that the MRD code defined by \eqref{eq:MRD_scattered} is $\mathbb{F}_{q^n}$-linear. Using the terminology in \cite{lunardon_kernels_2016}, one of its nuclei is $\mathbb{F}_{q^n}$. The equivalence problem of $\mathbb{F}_{q^n}$-linear MRD codes is slightly easier to handle compared with other MRD codes; see \cite{morrison_equivalence_2014}. It can be easily proved that for two given scattered polynomials $f$ and $g$, if they define two equivalent MRD codes, then the two associated maximum scattered linear sets are also equivalent. However the converse statement is not true in general; see \cite{csajbok_classes_2017,sheekey_new_2016}.

To the best of our knowledge, up to the equivalence of the associated MRD codes, all constructions of scattered polynomials for arbitrary $n$ can be summarized as one family
\begin{equation}\label{eq:the_one}
	f(x)=\delta x^{q^s} + x^{q^{n-s}},
\end{equation}
where $s$ satisfies $\gcd(s,n)=1$ and $\mathrm{Norm}_{\mathbb{F}_{q^n}/\mathbb{F}_q}(\delta)=\delta^{(q^n-1)/(q-1)}\neq 1$. 

When $\delta=0$ and $n-s=1$, $f$ defines the maximum scattered $\mathbb{F}_q$-linear set in $\PG(1, q^n)$ found by Blokhuis and Lavrauw \cite{blokhuis_scattered_2000}. In fact, no matter which value $s$ takes, $f(x)=x^{q^s}$ defines the same maximum scattered $\mathbb{F}_q$-linear set. However, the MRD codes associated with $x^{q^s}$ and $x^{q^t}$ are inequivalent if and only if $s\not\equiv \pm t \pmod{n}$.

When $\delta\neq 0$, $f$ defines the MRD codes constructed by Sheekey in \cite{sheekey_new_2016} and the equivalence problem was completely solved in \cite{lunardon_generalized_2015}. In particular, when $s=1$, the associated maximum scattered $\mathbb{F}_q$-linear set in $\PG(1, q^n)$ was found  by Lunardon and Polverino \cite{lunardon_blocking_2001}. In \cite{csajbok_classes_2017}, it is claimed that for different $s$ the associated linear sets can be inequivalent.

Besides the family of scattered polynomials defined in \eqref{eq:the_one}, very recently, Csajb\'ok, Marino, Polverino and Zanella found another new family of MRD codes which are of the form 
\begin{equation}\label{eq:the_two}
f(x)=\delta x^{q^s} + x^{q^{n/2+s}},
\end{equation}
for $n=6, 8$ and some $\delta\in\mathbb{F}_{q^n}^*$; see \cite{csajbok_newMRD_2017}.

As scattered polynomials appear to be very rare, it is natural to look for some classifications of them. Given an integer $0\le t\le n-1$ and a $q$-polynomial $f$ whose coefficients are in $\mathbb{F}_{q^n}$, if 
\begin{equation}\label{eq:main_target}
	U_m= \{(x^{q^t}, f(x)): x\in \mathbb{F}_{q^{mn}}  \}
\end{equation}
defines a maximum scattered linear set in $\PG(1,q^{mn})$ for infinitely many $m$, then we call $f$ an \emph{exceptional scattered polynomial of index} $t$. {In particular, if $U_1$ is maximum scattered, then we say $f$ is a scattered polynomial over $\mathbb{F}_{q^n}$ of index $t$. } 

Note that  \eqref{eq:main_target}  is slightly different from  \eqref{eq:U_(x,f)}: in this ways we can describe  the unique known family \eqref{eq:the_one} as an exceptional one. Taking $t=s$, from \eqref{eq:the_one} we get
\[\{(x^{q^s},x + \delta x^{q^{2s}} ): x\in \mathbb{F}_{q^{mn}}   \}  \]
which defines a maximum scattered linear set for all $mn$ satisfying $\gcd(mn,s)=1$. This means $x + \delta x^{q^{2s}}$ is exceptional of index $s$.

Assume that $U_m$ given by \eqref{eq:main_target} defines a maximum scattered linear set for some $m$. Now, we want to normalize our research objects to exclude some obvious cases.

\begin{enumerate}
	\item[[ C1]] \label{Prima} Without loss of generality, we assume that the coefficient of $X^{q^t}$ in $f(X)$ is always $0$.
	\item[[ C2]]\label{Seconda} When $t>0$, we assume that the coefficient of $X$ in $f(X)$ is nonzero; otherwise let $t_0=\min\{i: a_i\neq 0\}$ and it is equivalent to consider
	\[ \left\{\left(x^{q^{t-t_0}}, \sum_{i=t_0}^{n-1}a_i^{q^{n-t_0}}x^{q^{i-t_0}}\right): x\in \mathbb{F}_{q^{mn}}  \right\} \]
	instead of $U_m$.
	\item[[ C3]]\label{Terza} We assume that $f(X)$ is monic.
\end{enumerate}

The main results in \cite{BZ} can be summirized as follows.
\begin{theorem}[\!\!\cite{BZ}] \label{t01}
\begin{enumerate}
	\item For $q>5$, $X^{q^k}$ is the unique exceptional scattered monic polynomial of index $0$.
	\item The only exceptional scattered monic polynomials $f$ of index $1$ over $\mathbb{F}_{q^n}$ are $X$ and $bX + X^{q^2}$ where $b\in \mathbb{F}_{q^n}$ satisfying $\mathrm{Norm}_{q^n/q}(b)\neq 1$. In particular, when $q=2$, $f(X)$ must be $X$.
\end{enumerate}
\end{theorem}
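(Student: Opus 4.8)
The plan is to translate the exceptional scatteredness of a $q$-polynomial $f\in\mathbb{F}_{q^n}[X]$ of index $t$ into a statement about an algebraic curve and then feed it into the Hasse--Weil bound. Put $\Phi_f(X,Y)=f(X)Y^{q^t}-f(Y)X^{q^t}$. Since $f$ is a $q$-polynomial, $\Phi_f(0,Y)=0$ and $\Phi_f(X,\lambda X)=0$ for every $\lambda\in\mathbb{F}_q$, so $X^qY-XY^q$ divides $\Phi_f$ and one may define the affine plane curve $\mathcal{C}_f$ over $\overline{\mathbb{F}_q}$ by
\[
 g_f(X,Y)=\frac{f(X)Y^{q^t}-f(Y)X^{q^t}}{X^qY-XY^q}=0 .
\]
Using the normalizations [C1]--[C3] one checks that $g_f$ is divisible neither by $X$ nor by $Y$ nor by $Y-\lambda X$ for $\lambda\in\mathbb{F}_q^{\ast}$, so $\mathcal{C}_f$ contains none of these ``trivial'' lines; hence an $\mathbb{F}_{q^{mn}}$-rational point $(a,b)$ of $\mathcal{C}_f$ with $ab\neq0$ and $b/a\notin\mathbb{F}_q$ is exactly a pair witnessing the failure of \eqref{eq:scattered_poly} for $U_m$ in \eqref{eq:main_target}, and conversely every such witness gives such a point. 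Note that $g_f$ itself is defined over $\mathbb{F}_{q^n}$.

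The easy half of the argument is: if $\mathcal{C}_f$ has an absolutely irreducible component $\mathcal{D}$ defined over $\mathbb{F}_{q^n}$, then $f$ is not exceptional scattered of index $t$. Indeed, by Hasse--Weil the normalization of $\mathcal{D}$ has at least $q^{mn}+1-(\deg\mathcal{D}-1)(\deg\mathcal{D}-2)q^{mn/2}-\deg\mathcal{D}$ points over $\mathbb{F}_{q^{mn}}$, while the number of its points lying on the finitely many trivial lines or at infinity is bounded independently of $m$; so for all large $m$ one obtains a point of $\mathcal{C}_f$ off the trivial locus, i.e.\ $U_m$ is not maximum scattered. Taking the contrapositive, every exceptional scattered $f$ of index $t$ yields a curve $\mathcal{C}_f$ \emph{without} absolutely irreducible components defined over $\mathbb{F}_{q^n}$; since $g_f$ is $\mathbb{F}_{q^n}$-rational, this says $g_f$ factors over $\overline{\mathbb{F}_q}$ so that every $\mathrm{Frob}_{q^n}$-orbit of irreducible factors has length at least $2$.

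The hard half is to show that, for $t\in\{0,1\}$, this factorization constraint forces $f$ into the stated list; afterwards one checks directly that the listed polynomials are exceptional scattered (for $X^{q^k}$ one verifies that $U_m$ is scattered whenever $\gcd(k,mn)=1$, which requires $\gcd(k,n)=1$; for $bX+X^{q^2}$ one verifies that $U_m$ is scattered whenever $b$ is not a $(q-1)$-th power in $\mathbb{F}_{q^{mn}}$, which holds for infinitely many $m$ exactly when $\mathrm{Norm}_{q^n/q}(b)\neq1$). For $t=1$, writing $f=bX+\sum_{i\geq2}a_iX^{q^i}$ with leading coefficient $1$ and using $(u-v)^q=u^q-v^q$, one makes $g_f$ nearly explicit; for $f=bX+X^{q^2}$ a short computation gives
\[
 g_f(X,Y)=(X^qY-XY^q)^{q-1}-b=\prod_{\beta^{\,q-1}=b}(X^qY-XY^q-\beta) ,
\]
each factor being absolutely irreducible (after $X=YT$ it becomes the Artin--Schreier curve $T^q-T=\beta Y^{-(q+1)}$) and defined over $\mathbb{F}_{q^n}$ exactly when $b$ is a $(q-1)$-th power in $\mathbb{F}_{q^n}$, i.e.\ when $\mathrm{Norm}_{q^n/q}(b)=1$; this both forces the condition $\mathrm{Norm}_{q^n/q}(b)\neq1$ and explains why for $q=2$ (the product now has a single factor, automatically $\mathbb{F}_{2^n}$-rational) only $f=X$ survives. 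It then remains to rule out every other index-$1$ polynomial (one with a term $X^{q^i}$, $i\geq3$, or with more than two nonzero coefficients), and, for $t=0$ with $f=\sum_{i=1}^k a_iX^{q^i}$ and $a_k=1$, to rule out every $f\neq X^{q^k}$ --- for which $g_f=(X^{q^k-1}-Y^{q^k-1})/(X^{q-1}-Y^{q-1})$ splits into the linear forms $X-\zeta Y$, $\zeta\in\mu_{q^k-1}\setminus\mu_{q-1}$, none of them being $\mathrm{Frob}_{q^n}$-invariant precisely when $\gcd(k,n)=1$.

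The main obstacle is exactly this last step: showing that for $f$ outside the expected list some irreducible factor of $g_f$ \emph{is} fixed by $\mathrm{Frob}_{q^n}$. A priori $\mathcal{C}_f$ can be highly singular and can split over $\overline{\mathbb{F}_q}$ in intricate ways, so one has to study the local structure of $\mathcal{C}_f$ --- its tangent cones and Newton polygons at the singular points and at the points on the line at infinity --- in order to control the Galois action on the branches and exhibit an orbit of length $1$. For $t=0,1$ the degree of $\mathcal{C}_f$ (namely $q^k-q$, respectively $q^k-1$) is controlled well enough that the argument closes when $q$ is not too small; but for small $q$ a handful of low-degree Galois-conjugate splitting patterns evade it and must be excluded by hand, which is exactly what the present paper settles for index $0$ when $q\leq4$. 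I expect essentially all of the real work to be concentrated in this local analysis.
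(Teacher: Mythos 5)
A preliminary remark on what you are being compared against: Theorem \ref{t01} is quoted from \cite{BZ} and is not proved in the present paper at all; this paper only supplies the missing cases $q\le 4$ of part (1) and extends the method to higher index. Your setup --- the curve $\mathcal{C}_f$ of Lemma \ref{le:link}, the equivalence of \eqref{eq:scattered_poly} with the absence of affine points $(x,y)$ with $y/x\notin\mathbb{F}_q$, and the Hasse--Weil argument showing that an absolutely irreducible component defined over the base field destroys exceptionality --- is exactly the architecture of \cite{BZ} and of Section \ref{Sec:Machinery} here, and the computations you do carry out (the factorizations of $g_f$ for $X^{q^k}$ and for $bX+X^{q^2}$, and the norm condition) are correct.

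There is nonetheless a genuine gap, and it is not a peripheral one: what you have written establishes only (i) the reduction to a statement about components of $\mathcal{C}_f$, and (ii) that the polynomials \emph{on} the list behave as claimed. The entire content of the classification is the converse --- that for every monic $f$ of index $0$ other than $X^{q^k}$ (resp.\ of index $1$ other than $X$ and $bX+X^{q^2}$) the curve $\mathcal{C}_f$ \emph{does} possess an absolutely irreducible component defined over $\mathbb{F}_{q^n}$ --- and this is precisely the step you defer (``it then remains to rule out every other polynomial''). Deferring it leaves the theorem unproved. The mechanism that closes it, in \cite{BZ} and in Sections 2--3 of this paper, is: (a) locate all singular points of $\mathcal{D}_f$ (the origin and the points on the ideal line); (b) for a putative splitting of $\mathcal{C}_f$ into components $\mathcal{A}$, $\mathcal{B}$ with nothing in common, bound $\sum_P I(P,\mathcal{A}\cap\mathcal{B})$ from above by a branch analysis via local quadratic transformations (the analogues of Lemmas \ref{lem1}--\ref{lem2}); (c) assuming no component is $\mathbb{F}_{q^n}$-rational, group the conjugate absolutely irreducible factors into two products $A,B$ of nearly equal degree using \cite[Lemma~10]{hernando_proof_2011}, so that $\deg A\cdot\deg B\ge 2(\deg\mathcal{C}_f)^2/9$; and (d) contradict B\'ezout's Theorem \ref{th:bezout}. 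None of (a)--(d) appears in your proposal, and they cannot be supplied by a routine Newton-polygon remark: the number of branches at the ideal singular points and the Galois action on them depend delicately on the exponents $q^{k_i}$ occurring in $f$, which is exactly where the surviving families emerge. This is also why the dichotomy $q>5$ versus $q\le 5$ appears in part (1): for small $q$ the B\'ezout inequality fails to be contradictory for a handful of low-degree shapes, and eliminating that residue is the purpose of the present paper.
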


Scattered polynomials are related with algebraic curves via the following straightforward result; see also \cite[Lemma 2.1]{BZ}.

\begin{lemma}\label{le:link}
	The vector space $U= \{(x^{q^t}, f(x)): x\in \F_{q^{n}}\}$ defines a maximum scattered linear set $L(U)$ in $\PG(1, q^n)$ if and only if the curve defined by
	\begin{equation}\label{eq:curve_condition}
	\frac{f(X)Y^{q^t} - f(Y)X^{q^t}}{X^qY-XY^q}
	\end{equation}
	in $\PG(2,q^n)$ contains no affine point $(x,y)$ such that $\frac{y}{x}\notin\F_{q}$.
\end{lemma}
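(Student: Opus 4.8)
The plan is to rewrite ``$L(U)$ is maximum scattered'' as the non‑existence of a certain kind of zero of a bivariate polynomial, and then to recognise that polynomial — after dividing out a harmless factor — as the one defining the curve in \eqref{eq:curve_condition}.

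\emph{Step 1 (reformulation of ``maximum scattered'').} First I would unwind the definition of weight. The map $x\mapsto(x^{q^t},f(x))$ is an $\F_q$-linear injection $\F_{q^n}\hookrightarrow\F_{q^n}^2$, and two nonzero images $(x^{q^t},f(x))$ and $(y^{q^t},f(y))$ are $\F_{q^n}$-proportional — i.e.\ span the same point of $\PG(1,q^n)$ — precisely when $f(x)y^{q^t}-f(y)x^{q^t}=0$. Since $U$ has rank $n$, which is the largest rank a scattered linear set of $\PG(1,q^n)$ can have, ``$L(U)$ maximum scattered'' means exactly ``every point of $L(U)$ has weight $1$'', and by the previous sentence this holds if and only if
\[
f(x)\,y^{q^t}=f(y)\,x^{q^t}\ \Longrightarrow\ y/x\in\F_q \qquad(x,y\in\F_{q^n}^*),
\]
where one uses that $z\mapsto z^{q^t}$ is an automorphism of $\F_{q^n}$ fixing $\F_q$, so that $(y/x)^{q^t}\in\F_q\iff y/x\in\F_q$. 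This is just the index-$t$ form of \eqref{eq:scattered_poly}, so I would state it briefly.

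\emph{Step 2 (\eqref{eq:curve_condition} is a genuine plane curve).} Put $N(X,Y)=f(X)Y^{q^t}-f(Y)X^{q^t}$ and $D(X,Y)=X^qY-XY^q$. Because $f$ is $\F_q$-linear and $f(0)=0$, one gets $N(X,\lambda X)=\lambda f(X)X^{q^t}-\lambda f(X)X^{q^t}=0$ for every $\lambda\in\F_q$, and $N(0,Y)=0$; hence the $q+1$ pairwise coprime linear forms $X$ and $Y-\lambda X$ ($\lambda\in\F_q$) all divide $N$ in the UFD $\F_{q^n}[X,Y]$, and their product equals $-D$. Thus $D\mid N$, the quotient $g:=N/D$ lies in $\F_{q^n}[X,Y]$, and \eqref{eq:curve_condition} denotes the plane curve $\cZ:\,g(X,Y)=0$ (which does not contain the linear components of $D=0$). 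Finally, since $D(X,Y)=XY\prod_{\lambda\in\F_q^*}(X-\lambda Y)$, for $x,y\in\F_{q^n}$ one reads off that $D(x,y)\neq0$ if and only if $y/x\notin\F_q$ (and in particular then $x\neq 0$).

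\emph{Step 3 (conclusion).} If $x,y\in\F_{q^n}$ satisfy $y/x\notin\F_q$, then $D(x,y)\neq0$ by Step 2, so $g(x,y)=0\iff N(x,y)=0$, and such $x,y$ are automatically nonzero. Hence $\cZ$ has an affine point $(x,y)$ with $y/x\notin\F_q$ if and only if there exist $x,y\in\F_{q^n}^*$ with $f(x)y^{q^t}=f(y)x^{q^t}$ and $y/x\notin\F_q$; by Step 1 the latter says precisely that $L(U)$ is \emph{not} maximum scattered, which is the asserted equivalence. There is no real obstacle here: the only points needing any care are the divisibility $D\mid N$ in Step 2 — which is what makes \eqref{eq:curve_condition} a curve rather than a rational function — and the bookkeeping matching the non‑vanishing of $D$ with the condition $y/x\notin\F_q$, including the degenerate cases $x=0$ and $y=0$. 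I would also remark that this lemma concerns only $\F_{q^n}$-rational points; the passage to the ``exceptional'' setting is obtained by applying it over each $\F_{q^{mn}}$ to the same curve $\cZ$.
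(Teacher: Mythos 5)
Your proof is correct and is exactly the ``straightforward'' argument the paper has in mind (it gives no proof itself, deferring to \cite[Lemma 2.1]{BZ}): reformulate maximum scatteredness as condition \eqref{eq:scattered_poly} in its index-$t$ form, observe that the $q+1$ coprime linear factors of $X^qY-XY^q$ divide the numerator so that \eqref{eq:curve_condition} is a polynomial, and match the non-vanishing of the denominator with $y/x\notin\F_q$. The divisibility check and the remark that rank $n$ is the maximal possible rank are the right details to include, and nothing is missing.
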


In this paper we  close the gaps left for $q\leq4$ in the above classification of index $0$ exceptional scattered polynomials, proving that Theorem \ref{t01} (1) holds also in these cases.
We also obtain partial results for exceptional scattered polynomials of index larger than $1$. More precisely, the following is the main result of the paper.
\begin{theorem} \label{main}
Let $t \geq 2$ be a natural number. Then the unique exceptional scattered polynomials of index $t$ are those having at least two non-trivial terms of $q$-degree less than $t$ (other than the one of $q$-degree zero).
\end{theorem}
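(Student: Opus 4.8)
The plan is to pass, via Lemma~\ref{le:link}, to the plane curve $\mathcal{C}_f$ with affine equation
\[g(X,Y)=\frac{f(X)Y^{q^t}-f(Y)X^{q^t}}{X^qY-XY^q}=0\]
and to argue with the Hasse--Weil bound, exactly as in the proof of Theorem~\ref{t01} in \cite{BZ}. Using [C1], [C2] and [C3] we may take $f(X)=X+\sum_{i\ge1}a_iX^{q^i}$ with $a_t=0$ and leading coefficient $1$; then $\deg\mathcal{C}_f=q^k+q^t-q-1$, where $q^k$ is the $X$-degree of $f$, and by Lemma~\ref{le:link} applied over $\mathbb{F}_{q^{mn}}$ the space $U_m$ is maximum scattered if and only if $\mathcal{C}_f$ has no $\mathbb{F}_{q^{mn}}$-rational affine point $(x,y)$ with $x,y\neq0$ and $y/x\notin\mathbb{F}_q$; here the lines $X=0$, $Y=0$ and $X=cY$ with $c\in\mathbb{F}_q^*$ are precisely the components divided out in forming the above quotient, so they are not components of $\mathcal{C}_f$.

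First I would record the standard \emph{exceptionality criterion}: if $\mathcal{C}_f$ has an absolutely irreducible component $\mathcal{D}$ defined over $\mathbb{F}_{q^n}$ that is not one of those $q+1$ lines, then $f$ is not exceptional of index $t$. Indeed $\mathcal{D}$ is then $\mathbb{F}_{q^{mn}}$-rational for every $m$, so by Hasse--Weil it has at least $q^{mn}+1-(d-1)(d-2)q^{mn/2}-c_0$ rational places (with $d=\deg\mathcal{D}$ and $c_0$ bounding the contribution of its singular points), of which at most $(q+2)d$ can lie on those finitely many lines or at infinity; hence for all large $m$ a ``bad'' point appears and $U_m$ fails to be maximum scattered. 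Consequently, if $f$ is exceptional of index $t$, then $\mathcal{C}_f$ has no non-degenerate absolutely irreducible $\mathbb{F}_{q^n}$-rational component, so over $\mathbb{F}_{q^n}$ it is forced to be strongly reducible, its geometric components forming non-trivial $\mathrm{Gal}(\overline{\mathbb{F}}_q/\mathbb{F}_{q^n})$-orbits.

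The substance of the argument is then to translate this reducibility requirement into the term condition. I would analyse the local structure of $\mathcal{C}_f$ at $(0,0)$ and at the points at infinity $(1:0:0)$ and $(0:1:0)$: writing the numerator as $f(X)Y^{q^t}-X^{q^t}\bigl(Y+\sum_{i\ge1}a_iY^{q^i}\bigr)$ and computing the corresponding Newton polygons and tangent cones isolates the ``small'' pieces that can split off $\mathcal{C}_f$. For instance a line $Y=\alpha X$ is a component precisely when $\alpha^{q^t}=\alpha^{q^i}$ for every $i$ with $a_i\neq0$, so the field generated by such $\alpha$ — and, by an analogous computation, the field of definition of the other low-degree components — is governed by the greatest common divisor of $t$ with the exponents $i<t$ for which $a_i\neq0$. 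The heart of the matter is to show that this analysis produces an absolutely irreducible $\mathbb{F}_{q^n}$-rational non-degenerate component of $\mathcal{C}_f$ — and hence, by the criterion above, destroys exceptionality — unless $f$ has at least two non-trivial terms of $q$-degree in $\{1,\dots,t-1\}$; and conversely that for $f$ of that shape the relevant components are only defined over extensions transverse to $\mathbb{F}_{q^n}$, so that (after a direct verification that such $f$ are indeed scattered over $\mathbb{F}_{q^{mn}}$ for infinitely many $m$) they are exactly the exceptional ones. For $t=2$ the set $\{1,\dots,t-1\}$ is the single point $\{1\}$, so no monic polynomial can meet the condition, there are no exceptional scattered monic polynomials of index~$2$, and undoing the reduction in [C2] for polynomials with vanishing coefficient of $X$ yields the classification of all exceptional scattered monic polynomials of index~$2$.

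I expect this last step to be the main obstacle. For indices $0$ and $1$ the curve $\mathcal{C}_f$ is of sufficiently low and rigid shape that its factorisation can be obtained by an explicit, essentially elementary computation; for general $t$ the degree grows like $q^k+q^t$, the defining polynomial is no longer (anti-)symmetric once the index is fixed, and $\mathcal{C}_f$ acquires many singular points, so one cannot stop at geometric reducibility but must prove that a genuine $\mathbb{F}_{q^n}$-\emph{rational} component splits off. This requires a careful Puiseux/Newton-polygon analysis of the branches of $\mathcal{C}_f$ at the bad points together with a Galois-descent argument, carried out uniformly in $t$, in $k$, and in the positions of the non-zero coefficients of $f$ — and that is where the real work lies.
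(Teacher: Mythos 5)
Your overall frame is the right one and matches the paper's: reduce to the curve $\mathcal{C}_f$ via Lemma~\ref{le:link}, observe that an absolutely irreducible component defined over the ground field and distinct from the lines $X=cY$ forces, by Hasse--Weil, a ``bad'' affine point for all large $m$, and therefore aim to show that such a component must exist whenever $f$ has at most one non-trivial term of $q$-degree in $\{1,\dots,t-1\}$. But the proposal stops exactly where the theorem begins: you state that ``the heart of the matter'' is to produce the rational component by a Puiseux/Newton-polygon and Galois-descent analysis, and then acknowledge in your final paragraph that this step is not carried out. That step is the entire content of the result, so as it stands there is a genuine gap, not a proof. Moreover, the route you sketch for filling it (looking for an explicit low-degree component, e.g.\ a line $Y=\alpha X$, splitting off over a controlled field) is not how the argument can be made to work: the lines $Y=\alpha X$ with $\alpha\in\mathbb{F}_q$ are already divided out of $\mathcal{C}_f$, and for general $t$ one cannot exhibit the rational component explicitly.

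What the paper actually does is a counting argument by contradiction, and the hypothesis on the low-degree terms enters in a specific, local way that your sketch does not capture. One assumes $\mathcal{C}_f$ has \emph{no} absolutely irreducible component over $\mathbb{F}_{q^r}$; then each $\mathbb{F}_{q^r}$-irreducible factor splits into $s_i\ge 2$ conjugate absolutely irreducible factors of equal degree, and grouping these into two subproducts $A,B$ of comparable size gives the lower bound $\deg A\cdot\deg B\ge \tfrac{2}{9}(q^{k_M}+q^t-q-1)^2$. On the other side, B\'ezout's Theorem~\ref{th:bezout} says $\deg A\cdot\deg B=\sum_P I(P,\mathcal{A}\cap\mathcal{B})$, and this sum is bounded above by a local analysis of branches at the singular points: repeated local quadratic transformations show that at each point $R_\xi$ there is a \emph{unique} branch (Lemmas~\ref{lem1}--\ref{lem1_2BIS}), so two putative components cannot both pass through it, while at $S_1$ and its translates the branch count gives $I\le q^{k_M+t}/4$ (Lemma~\ref{lem2}). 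The hypothesis that at most one $k_i$ is smaller than $t$ is precisely what makes the single-branch lemmas at the $R_\xi$ go through (they require $k_i\ge t$ for $i\ge 2$, with the one exceptional index $k_1=1$ handled separately); without it the local bounds fail and the two sides of B\'ezout no longer conflict. None of this machinery --- the conjugate-factor lower bound, the branch-counting upper bounds, or the role of the term condition in the local analysis --- appears in your proposal, so the decisive comparison that yields the contradiction (and hence Proposition~\ref{fine} and the theorem) is missing.
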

Since for $t=2$ the condition required in Theorem \ref{main} is trivially satisfied by every $q$-polynomial $f(X)$, the complete classification of exceptional scattered polynomials of index $2$ is obtained.
\begin{corollary} \label{main1}
The only exceptional scattered monic polynomials $f$ of index $2$ over $\mathbb{F}_{q^r}$ are those of type \eqref{eq:the_one}. \end{corollary}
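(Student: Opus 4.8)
The plan is to read off Corollary \ref{main1} as the case $t=2$ of Theorem \ref{main}, combined with the observations on the family \eqref{eq:the_one} already recorded in the introduction; no new argument is needed, the content being simply that for $t=2$ the ``undecided'' class in Theorem \ref{main} is empty.

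For the implication that every member of \eqref{eq:the_one} is an exceptional scattered polynomial of index $2$, recall that setting $s=2$ in \eqref{eq:the_one} and re-indexing produces the $q$-polynomial $X+\delta X^{q^{4}}$ --- equivalently, after rescaling to the monic form required by [C3], $X^{q^{4}}+\delta^{-1}X$ with $\mathrm{Norm}_{q^{n}/q}(\delta^{-1})\neq 1$ --- which defines a maximum scattered linear set in $\PG(1,q^{mn})$ for all $m$ with $\gcd(mn,2)=1$; the degenerate ($\delta=0$) member $X$ of \eqref{eq:the_one} is likewise exceptional of index $2$. For the converse, let $f\in\mathbb{F}_{q^{r}}[X]$ be an exceptional scattered polynomial of index $2$, normalized by [C1]--[C3], and apply Theorem \ref{main} with $t=2$. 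The only $q$-monomials of $q$-degree strictly less than $2$ are $X^{q^{0}}=X$ and $X^{q^{1}}=X^{q}$; discarding the one of $q$-degree $0$ leaves only $X^{q}$, so any $q$-polynomial has at most one non-trivial term of $q$-degree less than $2$ besides the term of $q$-degree $0$. Hence the class isolated by Theorem \ref{main} --- polynomials possessing at least two such terms --- is empty when $t=2$, and Theorem \ref{main} forces $f$ to be of type \eqref{eq:the_one}, completing the classification.

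The only delicate point is the interface with the normalizations [C1]--[C3]: one should check that, written in the monic normalized form, the index-$2$ members of \eqref{eq:the_one} are precisely the binomials $X^{q^{4}}+cX$ with $\gcd(2,n)=1$ (equivalently $n$ odd) and $\mathrm{Norm}_{q^{n}/q}(c)\neq 1$, together with $f(X)=X$, so that Corollary \ref{main1} reads as a genuine equivalence. Beyond this bookkeeping there is no obstacle: all the weight of the statement is carried by Theorem \ref{main}, which is itself established through the curve criterion of Lemma \ref{le:link} and Hasse--Weil-type estimates.
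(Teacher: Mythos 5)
Your argument is essentially identical to the paper's: the corollary is obtained by specializing Theorem \ref{main} (equivalently, Proposition \ref{fine}, whose hypothesis $k_i\geq t$ for $i\geq 2$ is automatic when $t=2$) and observing that no $q$-polynomial can have two non-trivial terms of $q$-degree in $\{1,\dots,t-1\}$ when $t=2$, while the exceptionality of the family \eqref{eq:the_one} of index $2$ is the computation already recorded in the introduction. Your extra bookkeeping about the monic normalized form of \eqref{eq:the_one} is harmless and does not change the route.
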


As in \cite{BZ}, the main idea consists in converting the original question  into an investigation of a special type of algebraic curves. Then approaches besed on intersection theory or function field theory together with the Hasse-Weil Theorem are used  to get contradictions.

\section{An approach based on intersection multiplicity}\label{Sec:Machinery}

In this paper we use investigations on singular points of curves $\mathcal{C}_f$ associated with scattered polynomials $f(X)$ (see Lemma \ref{le:link} above) to get information on the existence of absolutely irreducible components of $\mathcal{C}_f$ defined over $\mathbb{F}_q$.

This approach  has been used for the first time by Janwa, McGuire and Wilson \cite{janwa_double-error-correcting_1995} to classify functions on $\mathbb{F}_{p^n}$ that are almost perfect nonlinear for infinitely many $n$, in particular for monomial functions.

Later on, improvements of such a method have been used to answer existence questions about  several famous functions defined over finite fields which are also quite rare; see \cite{jedlicka_apn_2007,hernando_proof_2011}. The same approach has been applied in \cite{hernando_proof_2012} to prove a conjecture on monomial hyperovals and in \cite{leducq_functions_PN_2015} to get partial results towards the classification of monomial planar functions for infinitely many $n$, which was later completely solved by Zieve~\cite{zieve_planar_2015} by using the classification of indecomposable exceptional (permutation) polynomials. Similar results and approaches can also be found in \cite{caullery_large_class_2014,caullery_classification_2015,caullery_exceptional_2016,rodier_functions_APN_2011,schmidt_planar_2014,BZ}.

As in \cite{BZ}, the main tool is the use  of branches and  local quadratic transformations of a plane curve  to obtain a better estimate for the intersection number of two components of a fixed curve at one of its  singular points. Recently, an approach based on local quadratic transformations which uses implicitly branches has been applied in \cite{BS2018} to classify exceptional planar functions in characteristic two.

Consider an algebraic curve $\mathcal{C}$ defined over $\mathbb{F}_q$. Suppose, by way of contradiction, that $\mathcal{C}$ has no absolutely irreducible components over $\mathbb{F}_{q}$. We divide our proof into four steps.

\begin{enumerate}
    \item We find all the singular points of $\mathcal{C}$.
    \item We assume that $\mathcal{C}$ splits into two components $\mathcal{A}$ and $\mathcal{B}$ sharing no common irreducible component. An upper bound on the total intersection number of $\mathcal{A}$ and $\mathcal{B}$ is then obtained. The main ingredient here will be branch investigation using quadratic transformations.
    \item Under the assumption that $\mathcal{C}$ has no absolutely irreducible components over $\mathbb{F}_{q}$, we decompose $F(X,Y)$ as  $A(X,Y) B(X,Y)$ and obtain a lower bound on $(\deg A)( \deg B)$.
    \item Finally, by using B\'ezout's Theorem (see Theorem \ref{th:bezout}), we get a contradiction between the two bounds.
\end{enumerate}

\begin{theorem}[B\'ezout's Theorem]\label{th:bezout}
	Let $\mathcal{A}$ and $\mathcal{B}$ be two projective plane curves over an algebraically closed field $\mathbb{K}$, having no components in common. Let $A$ and $B$ be the polynomials associated with $\mathcal{A}$ and $\mathcal{B}$ respectively. Then
	\[
	\sum_P I(P, \mathcal{A}\cap \mathcal{B})=(\deg A)(\deg B),
	\]
	where the sum runs over all points in the projective plane $PG(2,\mathbb{K})$.
\end{theorem}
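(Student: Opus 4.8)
The plan is to prove B\'ezout's Theorem by the classical resultant method, reducing the global statement to a degree count for a single polynomial in two variables. First I would fix homogeneous polynomials $A(X,Y,Z)$ and $B(X,Y,Z)$ of degrees $m=\deg A$ and $n=\deg B$ cutting out $\mathcal{A}$ and $\mathcal{B}$, and observe that since they share no component the ideal they generate defines only finitely many points, so $\mathcal{A}\cap\mathcal{B}$ is finite. Because $\mathbb{K}$ is algebraically closed, hence infinite, I can perform a projective change of coordinates so that the point $O=[0:0:1]$ lies on neither curve and no line through $O$ passes through two distinct points of $\mathcal{A}\cap\mathcal{B}$. The condition $O\notin\mathcal{A}\cup\mathcal{B}$ guarantees that $A$ and $B$, regarded as polynomials in $Z$ with coefficients in $\mathbb{K}[X,Y]$, have nonzero constant leading coefficients.

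Next I would form the resultant $R(X,Y)=\mathrm{Res}_Z(A,B)$. Standard properties of the resultant give that $R$ is homogeneous in $X,Y$ of degree exactly $mn$ (the leading-coefficient condition ensures no degree drop) and that $R$ is not identically zero precisely because $A,B$ have no common factor. The elimination property of the resultant says that $R(a,b)=0$ if and only if the specialized polynomials $A(a,b,Z)$ and $B(a,b,Z)$ have a common root in $\mathbb{K}$, which geometrically means that the line through $O$ in the direction $[a:b]$ meets $\mathcal{A}\cap\mathcal{B}$. Thus the zeros of $R$ on the projective line are in bijection with the lines through $O$ meeting the intersection, and by the general-position choice each such line contains a unique intersection point $P$.

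The crux is to identify the multiplicity of each root of $R$ with the corresponding intersection number. Concretely, I would prove the local lemma that for a line $\ell$ through $O$ meeting $\mathcal{A}\cap\mathcal{B}$ in the single point $P$, the multiplicity of the corresponding linear factor in $R$ equals $I(P,\mathcal{A}\cap\mathcal{B})$, where $I$ is taken in the algebraic sense $\dim_{\mathbb{K}}\mathcal{O}_P/(A,B)$. Granting this, B\'ezout follows by summing: the total degree $mn$ of $R$ equals $\sum_{\text{roots}}(\text{root multiplicity})=\sum_P I(P,\mathcal{A}\cap\mathcal{B})$. The matching lemma is the genuine obstacle, since it is exactly the point where the algebraic definition of intersection multiplicity must be reconciled with the elimination-theoretic count; I would prove it by passing to the affine chart $Z\neq 0$, expressing the resultant through the determinant of multiplication-by-$Z$ on the finite-dimensional algebra $\mathbb{K}[X,Y,Z]/(A,B)$ localized along the fibre over $[a:b]$, and comparing characteristic polynomials.

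Alternatively, and perhaps more cleanly, one can bypass the resultant and argue through Hilbert functions: after arranging all intersection points to be affine, the Koszul resolution $0\to S(-m-n)\to S(-m)\oplus S(-n)\to S\to S/(A,B)\to 0$ with $S=\mathbb{K}[X,Y,Z]$ shows that $\dim_{\mathbb{K}}(S/(A,B))_d$ stabilizes to $mn$ for $d\gg 0$, while a localization argument identifies this stable value with $\sum_P\dim_{\mathbb{K}}\mathcal{O}_P/(A,B)=\sum_P I(P,\mathcal{A}\cap\mathcal{B})$. In this route the main obstacle shifts to verifying that $A,B$ form a regular sequence, so that the Koszul complex is exact, and that the graded pieces decompose as a product of local rings; both rest on the no-common-component hypothesis forcing the intersection to be zero-dimensional.
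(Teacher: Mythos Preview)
Your proposal outlines two classical and correct approaches to B\'ezout's Theorem: the resultant method and the Hilbert-function/Koszul approach. Both are standard, and the key obstacles you identify (matching the root multiplicity of the resultant with the local intersection number, respectively verifying that $A,B$ form a regular sequence so the Koszul complex is exact) are indeed the genuine content of each route.

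However, the paper does not prove this theorem at all. B\'ezout's Theorem is merely \emph{stated} there as a classical tool (Theorem~\ref{th:bezout}) and then invoked in the proof of Proposition~\ref{fine}; no argument is given or expected. So there is no ``paper's own proof'' to compare against: you have supplied substantially more than the paper does. If anything, the relevant comparison is that the paper treats the result as a black box from algebraic geometry (e.g.\ as in Fulton or Hirschfeld--Korchm\'aros--Torres), whereas you sketch two self-contained proofs.
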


The following technical results will be used to study the branches at singular points of the curve $\mathcal{C}_f$.

\begin{proposition}\label{branch} Let $\mathcal{C}$ be the curve defined given by  $F(X,Y)=0$, where 
\begin{equation} \label{curvaprop}
F(X,Y)= AX^m+BY^n+ \sum a_{ij}X^iY^j,
\end{equation}
with $n<m$, $a_{m0}a_{0n}\neq0$, and
\begin{equation}\label{C:Conditions}
a_{ij}=0\qquad  \textrm{ if } \qquad \left\{ \begin{array}{l}
0<i<m; \textrm{ or}\\
i=0, j\leq n.
\end{array}\right.
\end{equation}
If $p\nmid (n,m)$ then $\mathcal{C}$ has $(n,m)$  branches centered at the origin.
\end{proposition}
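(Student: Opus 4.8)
The plan is to read off the local structure of $\mathcal C$ at the origin from the Newton polygon of $F$ and then to resolve the singularity by a chain of local quadratic transformations, in the spirit of the references quoted in this section. Concretely, I would first describe the Newton polygon $\mathcal N$ of $F$ at $O=(0,0)$: by \eqref{C:Conditions} the only monomials of $F$ on the vertical axis $i=0$ are $BY^n$ and possibly some $a_{0j}Y^j$ with $j>n$, every monomial with $0<i<m$ is missing, and the remaining monomials have $i\ge m$. Hence the lower-left boundary of the support of $F$ is the single segment $\Gamma$ joining $(0,n)$ to $(m,0)$, and every other monomial of $F$ lies strictly above the supporting line $ni+mj=nm$. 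In particular, since $n<m$, the lowest-degree form of $F$ at $O$ is $BY^n$, so $O$ is a point of multiplicity $n$ whose unique tangent line is $Y=0$.

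The crucial point is the edge polynomial of $\Gamma$. Put $d=(n,m)$ and write $n=d\bar n$, $m=d\bar m$ with $(\bar n,\bar m)=1$; the lattice points of $\Gamma$ are exactly $(m-k\bar m,\,k\bar n)$ for $k=0,\dots,d$, and for $0<k<d$ the abscissa satisfies $0<m-k\bar m<m$, so by \eqref{C:Conditions} the corresponding coefficient vanishes. Therefore the part of $F$ supported on $\Gamma$ reduces to $AX^m+BY^n$, and its associated one-variable edge polynomial (obtained by setting $W=Y^{\bar n}X^{-\bar m}$ and removing the common factor) is $\psi_\Gamma(W)=AW^{d}+B$. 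Because $p\nmid d$, the polynomial $\psi_\Gamma$ is separable, hence has $d$ pairwise distinct roots in $\overline{\mathbb{F}}_q$.

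Now I would run the resolution. Blowing up $O$ in the chart adapted to the tangent line $Y=0$, i.e. substituting $Y=XY_1$ and dividing by $X^n$, produces the strict transform $BY_1^n+AX^{m-n}+\sum a_{ij}X^{i+j-n}Y_1^{j}=0$; using \eqref{C:Conditions} exactly as above one checks that this again has a single Newton edge, from $(0,n)$ to $(m-n,0)$, with the very same edge polynomial $AW^{d}+B$, and that along the new exceptional line $X=0$ the strict transform meets it only at the new origin, with local intersection multiplicity $n$. Iterating — subtracting the smaller exponent from the larger and interchanging the two variables whenever the smaller exponent moves to the $X$-side — reproduces the Euclidean algorithm applied to $(n,m)$; since this always passes through the stage in which both exponents equal $d$, the process terminates at a curve of the form $A'U^{d}+B'V^{d}+(\text{strictly higher order terms})$. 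Here $p\nmid d$ forces the tangent cone $A'U^d+B'V^d$ to split into $d$ distinct lines, so the corresponding point is an \emph{ordinary} $d$-fold point, which has exactly $d$ branches (one per tangent line, equivalently one per simple root of $\psi_\Gamma$). As every branch of $\mathcal C$ at $O$ is visible along this chain of blow-ups, $\mathcal C$ has exactly $(n,m)$ branches centered at the origin.

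I expect the main obstacle to be the bookkeeping in the iteration: one must verify that at every step the equation retains the required shape — a single Newton edge with edge polynomial $AW^{d}+B$, together with a strict transform meeting the exceptional divisor in a single point — and in particular that none of the new monomials produced by the substitutions ever falls on (or below) the relevant Newton edge, so that they never disturb either the tangent cone or the edge polynomial. This is exactly where hypothesis \eqref{C:Conditions} is invoked repeatedly, and the condition $p\nmid(n,m)$ enters precisely at the final stage, to guarantee that the terminal $d$-fold point is ordinary rather than, say, a single branch concealed inside a Frobenius $p$-th power.
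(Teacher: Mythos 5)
Your proposal is correct and follows essentially the same route as the paper's proof: a chain of local quadratic transformations (your blow-up substitutions $Y=XY_1$, resp.\ $X=X_1Y$) that runs the Euclidean algorithm on $(n,m)$, with the hypotheses \eqref{C:Conditions} invoked at each step to keep the new monomials above the relevant Newton edge, terminating in a tangent cone $A'U^{d}+B'V^{d}$ that splits into $d=(n,m)$ distinct lines because $p\nmid d$. The Newton-polygon language is just a cleaner packaging of the explicit bookkeeping the paper carries out in its two-case induction.
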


\begin{proof}
If $n\mid m$ then, after applying $m_1=m/n-1$ times $F\mapsto F_1(X,Y)=F(X,XY)/X^n$ we can easily see that the origin is the center of $n=(n,m)$ distinct branches, since the tangent cone in $F_1(X,Y)$ is $AX^n+BY^n$.

Suppose now that $n\nmid m$. Let us consider $\ell_1$ the smallest integer such that  $m_1=m- \ell_1 n <n$. We apply $\ell_1$ times the local quadratic transformation  $F\mapsto F_1(X,Y)=F(X,XY)/X^n$. We have
\begin{equation*} \label{curvaprop1}
F_1(X,Y)= AX^{m_1}+BY^n+ \sum a_{ij}X^{i+\ell_1 (j-n)}Y^j.
\end{equation*}
By Conditions \eqref{C:Conditions} it is readily seen that the degree of each monomial $a_{ij}X^{i+\ell_1 (j-n)}Y^j$ is larger than $m_1$. Also, all the branches centered at the origin in $\mathcal{C}$ are still centered at the origin in $F_1(X,Y)$.

Apply now $k_1$ times the transformation  $G\mapsto G(XY,Y)/Y^{m_1}$, where $k_1$ is the smallest integer such that $n_1=n-k_1m_1\leq m_1$.

We distinguish two cases.
\begin{enumerate}
    \item $m_1\mid n$. In this case $F_2(X,Y)=F_1(XY,Y)/Y^{m_1}=AX^{m_1}+BY^{m_1}+\cdots$ and there are exactly $m_1=(n,m)$ branches centered at the origin in $\mathcal{C}$.
    \item $m_1\nmid n$. Then 
    \begin{equation*} \label{curvaprop2}
F_2(X,Y)= AX^{m_1}+BY^{n_1}+ \sum a_{ij}X^{i+\ell_1 (j-n)}Y^{j+(i+\ell_1(j-n)-m_1)k_1}.
\end{equation*}
Note that $i+\ell_1 (j-n)=0$ implies $i=0$ and $j=n$ and so $a_{ij}=0$ and so no monomial $Y^{\alpha}$ appears in $F_2(X,Y)$ apart from $BY^{n_1}$. Also $i+\ell_1 (j-n)<m_1$ if and only if $i+\frac{m-m_1}{n} (j-n)<m_1$ which yields $i+\frac{m-m_1}{n}j<m$ and so $i<m$. Since $a_{ij}=0$ if $0<i<m$, there is no monomial in $F_2(X,Y)$ with degree in $X$ smaller than $m_1$ apart from $BY^{n_1}$. Finally, all the branches centered at the origin in $F_1(X,Y)=0$ are centered at the origin in $F_2(X,Y)=0$.
\end{enumerate}
The polynomial $F_2(X,Y)$ satisfies Conditions \eqref{C:Conditions} and we can proceed by induction. 
\end{proof}

\begin{proposition}\label{branch2}
Let $\mathcal{C}$ be a curve of the affine equation 
$$Y^{q}+ \alpha X^{q}+X^{q^r-q^{r-1}+q-1}Y+L(X,Y), $$ 
where all the monomials in $L(X,Y)$ have degree at least $q^{r+1}+q-1$. Then there is a unique branch centered at the origin.
\end{proposition}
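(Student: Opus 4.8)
The plan is to prove this by resolving the singularity of $\mathcal{C}$ at the origin through a sequence of quadratic transformations, checking that, at every infinitely near point, the tangent cone is a power of a \emph{single} line. By the standard criterion (a curve germ has one branch iff its blow-up has a single point over the base point which again has one branch) this is exactly what is needed. Morally this is the situation complementary to Proposition \ref{branch}: there the tangent cone eventually splits into $(n,m)$ distinct lines because $p\nmid(n,m)$, producing $(n,m)$ branches, whereas here the exponents of $X$ and $Y$ in the two dominant monomials coincide and are divisible by $p$, so the tangent cone is a $q$-fold line and a single (highly ramified) branch arises.

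To begin, note that the tangent cone at the origin is $Y^{q}+\alpha X^{q}=(Y+\alpha^{1/q}X)^{q}$, a $q$-fold line, since $q$ is a power of $p=\mathrm{char}\,\mathbb{F}_q$ (so the Frobenius identity $(A+B)^{q}=A^{q}+B^{q}$ applies) and all remaining monomials ($X^{q^{r}-q^{r-1}+q-1}Y$ and those in $L$) have degree strictly larger than $q$. Hence the first blow-up has a unique point $P_1$ over the origin, with $\mathrm{mult}_{P_1}=q$. The crucial computation is then that, after the substitution $Y\mapsto-\alpha^{1/q}X+XY$ (a chart of this blow-up, adjusted so that the tangent line becomes $Y=0$) followed by division by $X^{q}$, the defining polynomial becomes
\[
Y^{q}+c_{1}X^{\,q^{r}-q^{r-1}}+X^{\,q^{r}-q^{r-1}}Y+L_{1}(X,Y),
\]
with $c_{1}\neq 0$ and every monomial of $L_{1}$ of $X$-degree at least $q^{r+1}-1$. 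The mechanism is the Frobenius cancellation $(-\alpha^{1/q}+Y)^{q}=-\alpha+Y^{q}$, which makes the two dominant terms $Y^{q}$ and $\alpha X^{q}$ collapse to $Y^{q}$; the new pure power of $X$ is created by the old term $X^{q^{r}-q^{r-1}+q-1}Y$, and a direct estimate shows the error monomials stay far above the tangent cone.

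From here I would iterate, maintaining a normal form of the shape $Y^{\mu}+cX^{\lambda}+(\text{controlled higher-order terms})$ with $c\neq 0$ and with the higher-order terms always of degree exceeding $\max(\mu,\lambda)$, stable under the blow-up at the relevant infinitely near point; the configurations to treat are $\lambda>\mu$, $\lambda<\mu$ and $\lambda=\mu$, with the tangent line being one of the two coordinate axes accordingly. In the first two cases the tangent cone is $Y^{\mu}$ or $cX^{\lambda}$, clearly uni-directional; the only delicate case is $\lambda=\mu$, where the tangent cone is the binomial $Y^{\mu}+cX^{\mu}$, and here one must check that such a clash occurs only when $\mu$ is a power of $p$ (it turns out the exponents produced along the process, built from $q$ and $r$ by subtractions reminiscent of the Euclidean algorithm, force this), so that $Y^{\mu}+cX^{\mu}=(Y+c^{1/\mu}X)^{\mu}$ is again a power of a single line. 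Since resolving a plane curve singularity requires only finitely many blow-ups, after finitely many steps we reach a smooth point, i.e. a single branch; unwinding the birational quadratic transformations — at each stage all branches centered at the current point are carried onto the single infinitely near point, and conversely — yields that $\mathcal{C}$ has exactly one branch centered at the origin.

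The main obstacle will be the bookkeeping in the inductive step: choosing the normal form broad enough to be preserved under all the blow-ups that actually occur yet tight enough to pin down the tangent cone; tracking the exponents through the various chart changes and Frobenius cancellations; ensuring that the error terms $L_{k}$ — whose control rests ultimately on the hypothesis that the monomials of $L$ have degree at least $q^{r+1}+q-1$ — never create a second tangent direction; and verifying the power-of-$p$ property of $\mu$ in the delicate $\lambda=\mu$ configuration. The rest consists of routine, if somewhat lengthy, calculations with the Frobenius identity.
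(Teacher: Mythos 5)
Your strategy --- resolving the singularity by successive quadratic transformations, using the Frobenius identity to collapse $Y^{q}+\alpha X^{q}$ into a $q$-fold line, and checking at each infinitely near point that the tangent cone is a power of a single linear form --- is exactly the mechanism of the paper's proof, and your computation of the first blow-up is correct. The gap is that the iteration, which is the entire content of the argument, is left as a declared ``obstacle'' rather than carried out, and the normal form you propose to maintain, $Y^{\mu}+cX^{\lambda}+(\text{controlled higher-order terms})$, cannot propagate itself: at each $\lambda=\mu=q$ clash the substitution $Y\mapsto -c^{1/q}X+Y$ annihilates the pure power of $X$ entirely, and the only source of a new one is the cross term $X^{a}Y$ (which contributes $-c^{1/q}X^{a+1}$). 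Since that cross term is not part of your stated normal form, nothing in your inductive hypothesis guarantees that a nonzero pure $X$-power with the right exponent reappears, so as written the induction does not close; likewise your claim that a $\lambda=\mu$ clash can only occur with $\mu$ a power of $p$ is asserted, not derived from the data you carry along.

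The paper closes precisely this gap by making the exact three-term shape of the statement the induction hypothesis: it inducts on $r$, showing that the substitution $Y\mapsto aX+Y$ with $a^{q}+\alpha=0$ followed by $q^{r-1}-q^{r-2}$ applications of $F\mapsto F(X,XY)/X^{q}$ reproduces verbatim the hypothesis with $r$ replaced by $r-1$: the cross term $X^{q^{r}-q^{r-1}+q-1}Y$ regenerates the new $aX^{q}$, descends to $X^{q^{r-1}-q^{r-2}+q-1}Y$, and the error terms keep degree at least $q^{r}+q-1$. The base case $r=1$ is finished with one application of $F\mapsto F(XY,Y)/Y^{q-1}$, which lands on a point whose tangent cone is a single line. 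If you replace your loose normal form by the three-term expression of the proposition and verify this single reduction $r\to r-1$, your argument becomes the paper's proof; without that, the bookkeeping you defer as routine is in fact the proof.
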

\proof
By induction on $r$.\\
If $r=1$, after applying the transformation $(X,Y) \mapsto (X,aX+Y)$, where $a^q+\alpha=0$, and $\theta(F(X,Y))=F(X,XY)/X^{q}$ one gets
$$Y^{q}+ a X^{q-1}+X^{q-1}Y+L^{\prime}(X,Y),$$
where $X\mid L^{\prime}(X,Y)$ and  $L^{\prime}(X,Y)$ contains monomials of degree at least  $q^2-1$. After applying $\eta(F(XY,Y)/Y^{q-1}$ one gets
$Y+aX^{q-1}+L^{\prime\prime}(X,Y)$,
with $Y\mid L^{\prime\prime}(X,Y)$, and therefore there is a unique branch centered at the origin.\\
Suppose that $r>1$. One applies $(X,Y) \mapsto (X,aX+Y)$, where $a^{q}+\alpha=0$, and    $q^{r-1}-q^{r-2}$ times  $\theta(F(X,Y))=F(X,XY)/X^{q}$, obtaining 
$$Y^q+aX^{q}+X^{q^{r-1}-q^{r-2}+q-1}Y+L^{\prime}(X,Y),$$
with monomials in $L^{\prime}(X,Y)$ of degree at least $q^{r+1}-q^r+q^{r-1}+q-1\geq q^{r}+q-1$ and the claim follows by the induction.
\endproof

\section{Exceptional scattered polynomials of index $t$}
In this section we investigate curves arising from index $t>0$ scattered polynomials. We assume that Conditions [C1], [C2], [C3] hold.

In the following it will be useful to consider the homogenized version of the starting linearized polynomial $f(X)\in \mathbb{F}_{q^r}[X]$. We denote it by the same symbol $f(X,T)$. Namely
$$f(X,T)=\sum_{i=0}^{M} A_i X^{q^{k_i}}T^{q^{k_M}-q^{k_i}}\in \mathbb{F}_{q^r}[X,T],$$
where $k_0=0$, $A_0 \ne 0$ and $A_M=1$. 

Recall that to each polynomial $f(X)$ is associated a curve $\mathcal{C}_f$ as shown in Lemma \ref{le:link}. In order to  apply the machinery described in Section \ref{Sec:Machinery}, we will investigate the singular points of the curve  $\mathcal{D}_f$ defined by $f(X)Y^{q^t}-f(Y)X^{q^t}=0$. In fact, as it can be easily seen, the set of its singular points contains also the singular points of $\mathcal{C}_f$. 
A homogeneous equation of $\mathcal{D}_f$ is given by $F(X,Y,T)=0$, where 
\begin{equation}\label{Def:F}
F(X,Y,T)=f(X,T)Y^{q^t}-f(Y,T)X^{q^t}=\sum_{i=0}^{M} A_i \left(X^{q^{k_i}}T^{q^{k_M}-q^{k_i}}Y^{q^t}-Y^{q^{k_i}}T^{q^{k_M}-q^{k_i}}X^{q^t}\right).
\end{equation}
The are no affine singular points in $\mathcal{D}_f$ apart from the origin. Note that the origin is, both in $\mathcal{C}_f$ and in $\mathcal{D}_f$, an ordinary singular point of multiplicity $q^{t}-q-1$ and $q^t$ respectively. The multiplicity of intersection of two putative components of $\mathcal{C}_f$ at such a point is therefore upperbounded by $(q^{t}-q-1)^2/4$.

All the other singular points of $\mathcal{D}_f$ (and therefore $\mathcal{C}_f$) are contained in the ideal line.

A singular point of $\mathcal{D}_f$ is the point $P=(0,1,0)$, while the other ideal singular points are of type $S_a=(a,1,0)$.

In order to study such points it is useful to consider the change of variables $(X,Y,T)\mapsto (T,Y,X)$. The affine equation of the corresponding curve $\widetilde{\mathcal{D}}_f$ is given by $G(X,Y)=0$, where 

\begin{equation}\label{Def:G}
    G(X,Y)=F(1,Y,X)=\sum_{i=0}^{M} A_i \left(X^{q^{k_M}-q^{k_i}}Y^{q^t}-Y^{q^{k_i}}X^{q^{k_M}-q^{k_i}}\right).
\end{equation}

Singular points of $\widetilde{\mathcal{D}}_f$   belong to three distinct groups:
\begin{itemize}
    \item $S_{\xi}=(0,\xi)$, with $\xi \in \mathbb{F}_{q}$.
    \item $R_{\xi}=(0,\xi)$, with $\xi^{q^{k_M}}=\xi^{q^t}$, $\xi \notin \mathbb{F}_q$ and $\xi^{q^{k_i}}\neq \xi^{q^t}$ for at least one  $i=0,\ldots,M-1$.
    \item $Q_{\xi}=(0,\xi)$, with $\xi^{q^{k_i}}=\xi^{q^t}$ for all $i=0,\ldots,M$ and $\xi \notin \mathbb{F}_q$.
\end{itemize}
The points $Q_{\xi}$ and $S_{\xi}$ are such that $\xi\in \mathbb{F}_{q^\ell}$ with $\ell=GCD(t,k_1-t,\ldots,k_M-t)=GCD(t,k_1,\ldots,k_M)$. These points are equivalent to $S_{1}$ via the automorphism $(X,Y)\mapsto (X,\xi Y)$.
While the point $S_1=(0,1)$ is equivalent to $S_0$ via the automorphism of $\tilde{\mathcal{D}}_f$ given by $(X,Y,T) \mapsto (X,Y+1,T)$. The point $S_0$ corresponds to the point $(1,0,0)$ of $\mathcal{D}_f$ which is equivalent to $P$ via the automorphism $(X,Y,T) \mapsto (Y,X,T)$. Hence we need to  study just the singularities $S_\xi$ and $R_\xi$ of $\tilde{\mathcal{D}}_f$.

First of all we consider singular points contained in the second group.

\begin{lemma} \label{lem1}
Let $R_{\xi}=(0,\xi)$, $\xi \in \mathbb{F}_{q^{k_M-t}}\setminus \mathbb{F}_{q^t}$, be a singular point of $\widetilde{ \mathcal{D}}_f$. If $k_i\geq t$ for each $i=1,\ldots,M$ then there is a unique branch centered at $R_{\xi}$. Thus, the multiplicity of intersection of two putative components of $\mathcal{C}_f$ in $R_{\xi}$ is $0$.

\end{lemma}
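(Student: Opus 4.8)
The plan is to reduce the statement to a direct application of Proposition~\ref{branch2}. I would start from the affine equation $G(X,Y)=0$ of $\widetilde{\mathcal D}_f$ in \eqref{Def:G} and translate the singular point $R_\xi=(0,\xi)$ to the origin via $(X,Y)\mapsto(X,Y+\xi)$, obtaining a new polynomial $H(X,Y)=G(X,Y+\xi)$. The key elementary fact to exploit is that, since $\xi^{q^{k_M}}=\xi^{q^t}$ and $\xi\notin\mathbb F_{q^t}$, the additive (linearized) map $z\mapsto z^{q^{k_M}}-z^{q^t}$ kills $\xi$; applying it to $Y+\xi$ and expanding $\mathbb F_q$-linearly, each factor $X^{q^{k_M}-q^{k_i}}Y^{q^t}-Y^{q^{k_i}}X^{q^{k_M}-q^{k_i}}$ in $G$ becomes, after the shift, a polynomial whose lowest-degree part in $Y$ is controlled. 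I expect that after collecting terms the translated curve has a local equation of the shape
\[
Y^{q}+\alpha X^{q}+X^{q^{k_M}-q^{k_M-1}+q-1}Y+L(X,Y),
\]
up to a nonzero scalar and possibly a linear change killing lower-order cross terms, with all remaining monomials of $L$ of degree at least $q^{k_M+1}+q-1$; this is exactly the hypothesis of Proposition~\ref{branch2} with $r=k_M$. Then Proposition~\ref{branch2} gives a unique branch centered at $R_\xi$, and since a point through which $\mathcal C_f$ (hence each putative component) has at most one branch cannot be a point of intersection of two components sharing no common branch, the intersection multiplicity there is $0$.

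The steps, in order: (i) translate $R_\xi$ to the origin and write out $H(X,Y)=G(X,Y+\xi)$; (ii) use $\mathbb F_q$-linearity of $z\mapsto z^{q^j}$ together with the defining relations of $\xi$ ($\xi^{q^{k_M}}=\xi^{q^t}$, and $\xi^{q^{k_i}}\ne\xi^{q^t}$ for some $i$, so that $R_\xi$ is genuinely in the second group and not a $Q_\xi$) to identify the Newton polygon of $H$ at the origin; (iii) read off the tangent cone and the lowest "mixed" monomial, checking they match $Y^q+\alpha X^q + X^{q^r-q^{r-1}+q-1}Y$ with $r=k_M$; (iv) verify every other monomial has degree $\ge q^{k_M+1}+q-1$, which is where the hypothesis $k_i\ge t$ for all $i\ge 1$ is used — it forces the exponents $q^{k_M}-q^{k_i}$ to be large enough; (v) invoke Proposition~\ref{branch2} and conclude as above about the intersection multiplicity.

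The main obstacle I anticipate is step (iv): making the degree bookkeeping in the translated polynomial $H$ precise enough to confirm that $L(X,Y)$ contains no monomial of degree below $q^{k_M+1}+q-1$, and in particular that the only monomial with $X$-degree strictly between $0$ and $q^{k_M}-q^{k_M-1}+q-1$ multiplying a low power of $Y$ is the claimed one. Because the shift $Y\mapsto Y+\xi$ mixes the terms $Y^{q^{k_i}}$ with all lower $q$-powers weighted by powers of $\xi$, one must track both the $X$-exponents $q^{k_M}-q^{k_i}$ and the newly created $Y$-exponents carefully; the hypothesis $k_i\ge t$ for $i=1,\dots,M$ is precisely what guarantees no "small" term sneaks in below the threshold. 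Once that is checked, the reduction to Proposition~\ref{branch2} is purely mechanical, and the final sentence about the intersection number follows because a branch of $\mathcal C_f$ at $R_\xi$ lies on exactly one of the two putative components.
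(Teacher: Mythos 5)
Your reduction target is wrong, and the mismatch is already visible at the level of the tangent cone. After the translation $H(X,Y)=G(X,Y+\xi)$, the polynomial reads $Y^{q^t}-Y^{q^{k_M}}+B_0X^{q^{k_M}-1}+\sum_{i=0}^{M-1}A_iX^{q^{k_M}-q^{k_i}}(Y^{q^t}-Y^{q^{k_i}})+\sum_{i=1}^{M-1}B_iX^{q^{k_M}-q^{k_i}}$, and since $k_i\geq t$ for $i\geq 1$ forces every exponent $q^{k_M}-q^{k_i}$ to be at least $q^{k_M-1}(q-1)\geq q^t$, the unique monomial of minimal degree is $Y^{q^t}$: the origin has multiplicity $q^t$ and tangent cone $Y^{q^t}$. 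Proposition~\ref{branch2}, by contrast, requires a point of multiplicity $q$ with tangent cone $Y^q+\alpha X^q$ and a distinguished mixed term $X^{q^r-q^{r-1}+q-1}Y$. For $t\geq 2$ no linear change of coordinates can turn the one into the other (linear changes preserve the multiplicity of the point), and no amount of "collecting terms" will produce the monomial $\alpha X^q$ you posit: the lowest pure power of $X$ in $H$ is $B_0X^{q^{k_M}-1}$, whose exponent is coprime to $q$, not equal to $q$. Proposition~\ref{branch2} is in fact the tool for the companion Lemma~\ref{lem1_BIS}, where the extra hypothesis $k_1=1<t$ injects a term $B_1X^{q^{k_M}-q}$ that, after the reduction chain, does produce a tangent cone of the shape $Y^q+\alpha X^q$; under the hypotheses of Lemma~\ref{lem1} no such term exists.

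What the paper actually does here is an explicit iterative reduction: repeated quadratic transformations $H\mapsto H(X,XY)/X^{q^t}$ interleaved with shears $(X,Y)\mapsto(X,\alpha_jX+Y)$ strip away the constant-in-$Y$ terms $B_{M_j}X^{q^t}$ one index at a time, until one reaches $Y^{q^t}+B_0X^{q^t-1}+\overline L(X,Y)$; a final transformation $H\mapsto H(XY,X)/Y^{q^t-1}$ then yields $Y+B_0X^{q^t-1}+\widetilde L(X,Y)$, whose tangent cone has degree one, giving the unique branch. The engine of the argument is the term $B_0X^{q^{k_M}-1}$ — nonzero precisely because $\xi\notin\mathbb{F}_{q^t}$ — whose exponent is prime to $q$ and which survives the whole chain; your proposal never identifies this term's role, and your step (iii) (matching the Newton polygon to that of Proposition~\ref{branch2} with $r=k_M$) cannot be carried out. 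The closing observation about intersection multiplicity being $0$ once a unique branch is established is correct, but the core of the proof is missing.
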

\proof
In order to study branches centered at $R_{\xi}$ we consider the polynomial 
\begin{eqnarray*}
H(X,Y)&=&G(X,Y+\xi)=G(X,Y)+G(X,\xi)=\sum_{i=0}^{M} A_i X^{q^{k_M}-q^{k_i}} \left(Y^{q^t}-Y^{q^{k_i}}+\eta_i\right)\nonumber \\
&=&Y^{q^t} -Y^{q^{k_M}}+B_0 X^{q^{k_M}-1}+\sum_{i=0}^{M-1} A_i X^{q^{k_M}-q^{k_i}} \left(Y^{q^t}-Y^{q^{k_i}}\right)+\sum_{i=1}^{M-1} B_i X^{q^{k_M}-q^{k_i}},
\end{eqnarray*}
where $\eta_i=\xi^{q^t}-\xi^{q^{k_i}}$ and $B_i=A_i \eta_i$. Note that, since $\xi\notin \mathbb{F}_{q^t}$, $B_0\neq 0$.

The point $R_{\xi}$ is mapped to the origin and its  tangent cone in $\mathcal{C}$ (the homogeneous polynomial  defined by $H(X,Y)=0$) is  $Y^{q^t}$.
In what follows we will perform a number of quadratic transformations.\\
Let $M_1$ be the largest index such that $B_{M_1}\neq 0$. Note that, since $R_{\xi}$ belongs to the second group, actually such $M_1$ exists. If $M_1=0$, then all $B_i=0$, $i=1,\ldots,M-1$.  We consider $q^{k_M-t}-1$ times the  transformation $\theta(H(X,Y))=H(X,XY)/X^{q^t}$ and we can easily see that 
$$H_1(X,Y)=Y^{q^t} -Y^{q^{k_M}} +B_0 X^{q^t-1}+L(X,Y).$$ The argument follows as in  Step 3.1.1. and  Step 3.1.2. below. Thus we consider now the case $M_1\neq 0$.

{\bf Step 1.} Let us consider the transformation $\theta(H(X,Y))=H(X,XY)/X^{q^t}$. Let 
$$u_1= \frac{q^{k_M}-q^{k_{M_1}}}{q^t}-1.$$


After $u_1$ applications of $\theta$, one gets 
\begin{eqnarray}\label{Def:H1}
H_1(X,Y)&=&Y^{q^t} -Y^{q^{k_M}} X^{u_1(q^{k_M}-q^t)} +B_{M_1}X^{q^t}+B_0 X^{q^{k_{M_1}}+q^t-1}+\sum_{i=0}^{M-1} A_i X^{q^{k_M}-q^{k_i}} Y^{q^t}\nonumber \\
&&-\sum_{i=0}^{M-1} A_i X^{q^{k_{M_1}}+q^t+(u_1-1)q^{k_i}} Y^{q^{k_i}}+\sum_{i=1}^{M_1-1} B_i X^{q^{k_{M_1}}+q^t-q^{k_i}}.
\end{eqnarray}

{\bf Step 2.} Let $\rho_1(X,Y)=(X,\alpha_1 X+Y)$ such that $\alpha_1^{q^t}+B_{M_1}=0$. After applying $\rho_1$ one gets 
\begin{eqnarray}\label{Def:H1primo}
H^{\prime}_1(X,Y)&=&Y^{q^t} {-(\alpha_1 X+Y)^{q^{k_{M}}} X^{u_1(q^{k_M}-q^t)}} +B_0 X^{q^{k_{M_1}}+q^t-1}+\sum_{i=0}^{M-1} A_i \alpha_1^{q^{t}} X^{q^{k_M}-q^{k_i}+q^t} \nonumber \\
&&+\sum_{i=0}^{M-1} A_i X^{q^{k_M}-q^{k_i}} Y^{q^t}-\sum_{i=0}^{M-1} A_i X^{q^{k_{M_1}}+q^t+(u_1-1)q^{k_i}} Y^{q^{k_i}}\nonumber\\
&&-\sum_{i=0}^{M-1} A_i\alpha_1^{q^{k_i}} X^{q^{k_{M_1}}+q^t+u_1q^{k_i}}+\sum_{i=1}^{M_1-1} B_i X^{q^{k_{M_1}}+q^t-q^{k_i}}.
\end{eqnarray}
Let us order the indices $k_i$ in such that $B_i \neq 0$ as $k_{M_1}>k_{M_2}>k_{M_3}>\cdots> k_{M_s}$. We distinguish two subcases.
\begin{enumerate}
\item Suppose that $M_2=0$. 

\noindent{\bf Step 3.1.1.} In $H_1^{\prime}(X,Y)$, the monomials of smallest degree are $Y^{q^t}$ and $B_0X^{q^{k_{M_1}}+q^t-1}$. If we apply $\theta$ exactly $q^{k_{M_1}-t}$ times we get 
$$\overline{H}(X,Y)=Y^{q^t}+B_0X^{q^t-1}+\overline{L}(X,Y),$$
where $L$ is a linearized polynomial in $Y$ with all the degrees in $X$ larger than $q^t-1$. Also, $0$ is the unique root of $\overline{H}(0,Y)$.\\

\noindent {\bf Step 3.1.2.} Now perform  $\tau(H(X,Y))=H(XY,X)/Y^{q^t-1}$: this gives 
$$\widetilde{H}(X,Y)=Y+B_0X^{q^t-1}+\widetilde{L}(X,Y),$$
where all the monomials in $\widetilde L$ have degree in $X$ larger than $1$ and then $0$ is the unique root of $\widetilde{H}(0,Y)$. The tangent cone has degree one now and there is a unique branch centered at the origin in the curve defined by $\widetilde{H}(X,Y)=0$ and so in $\mathcal{C}$. This also shows that the multiplicity of intersection of two putative components of $\mathcal{C}_f$ in the corresponding point is $0$.

\item Suppose now that $M_2\neq 0$.\\ 
First note that $q^{k_{M_1}}+q^t-q^{k_{M_2}}$ is the smallest degree of a monomial in $H_1^{\prime}$ apart from $Y^{q^t}.$ 

\noindent {\bf Step 3.2.1.} Let 
$$u_2=\frac{q^{k_{M_1}}-q^{k_{M_2}}}{q^t}.$$
We apply $u_2$ times   $\theta$  and get 
\begin{eqnarray*}
H_2(X,Y)&=&Y^{q^t}+B_0 X^{q^{k_{M_2}}+q^t-1}+\sum_{i=0}^{M-1} A_i \alpha_1^{q^{t}} X^{q^{k_M}-q^{k_{M_1}}+q^{k_{M_2}}-q^{k_i}+q^t} \nonumber \\
&&+\sum_{i=0}^{M-1} A_i X^{q^{k_M}-q^{k_i}} Y^{q^t}-\sum_{i=0}^{M-1} A_i X^{q^{k_{M_2}}+q^t+(u_1+u_2-1)q^{k_i}} Y^{q^{k_i}} \nonumber \\
&&-\sum_{i=0}^{M-1} A_i\alpha_1^{q^{k_i}} X^{q^{k_{M_2}}+q^t+u_1q^{k_i}}+\sum_{i=1}^{M_2} B_i X^{q^{k_{M_2}}+q^t-q^{k_i}}\nonumber\\
&&-(\alpha_1 X + X^{u_2}Y)^{q^{k_M}} X^{u_1(q^{k_M}-q^t)-u_2 q^t}.
\end{eqnarray*}

\noindent {\bf Step 3.2.2.} After $\rho_2(X,Y)=(X,\alpha_2 X+Y)$ with $\alpha_2^{q^t}+B_{M_2}=0$, one gets $H_2^{\prime}(X,Y)$ equal to 
\begin{eqnarray*}
&&Y^{q^t}+B_0 X^{q^{k_{M_2}}+q^t-1}+\sum_{i=0}^{M-1} A_i \alpha_1^{q^{t}} X^{q^{k_M}-q^{k_{M_1}}+q^{k_{M_2}}-q^{k_i}+q^t} \nonumber \\
&&+\sum_{i=0}^{M-1} A_i X^{q^{k_M}-q^{k_i}} (Y^{q^t}+\alpha_2^{q^t}X^{q^t})-\sum_{i=0}^{M-1} A_i X^{q^{k_{M_2}}+q^t+(u_1+u_2-1)q^{k_i}} (Y^{q^{k_i}}+\alpha_2^{q^{k_i}}X^{q^{k_i}})\nonumber\\
&&-\sum_{i=0}^{M-1} A_i\alpha_1^{q^{k_i}} X^{q^{k_{M_2}}+q^t+u_1q^{k_i}}+\sum_{i=1}^{M_2-1} B_i X^{q^{k_{M_2}}+q^t-q^{k_i}}\nonumber\\
&& -(\alpha_1 X + \alpha_2 X^{u_2+1}+X^{u_2} Y)^{q^{k_M}} X^{u_1(q^{k_M}-q^t)-u_2 q^t}.
\end{eqnarray*}
Now $H_2^{\prime}$ can be described as 
$$Y^{q^t}+B_0X^{q^{k_{M_2}}+q^t-1}+\sum_{i=1}^{M_2-1} B_i X^{q^{k_{M_2}}+q^t-q^{k_i}}+L(X,Y),$$
where $L(X,Y)$ is a linearized polynomial in $Y$ such that the monomials have degree  in $Y$ either $0$ or larger than $q^t-1$ and in $X$ larger than $k_{M_2}+q^t-1$. Note that also $H_1^{\prime}$ can be described in this way. 


\noindent {\bf Step 3.2.3.} We perform 
$$u_j=\frac{q^{k_{M_{j-1}}}-q^{k_{M_j}}}{q^t}$$
times $\theta$ and $\rho_j(X,Y)=(X,\alpha_j X+Y)$ with $\alpha_j^{q^t}+B_{M_j}=0$ and we obtain $$H_j^{\prime}(X,Y)=Y^{q^t}+B_0X^{q^{k_{M_j}}+q^t-1}+\sum_{i=1}^{M_j-1} B_i X^{q^{k_{M_j}}+q^t-q^{k_i}}+L^{\prime}(X,Y).$$
At the $s$-th step 
$$H_s^{\prime}(X,Y)=Y^{q^t}+B_0X^{q^{k_{M_s}}+q^t-1}+L^{\prime}(X,Y).$$
\noindent {\bf Step 3.2.4.} Note that at each step,   $0$ is the unique root of $H^{\prime}_j(0,Y)$ and therefore all the branches centered at the origin in $\mathcal{C}$ correspond to the branches centered at the origin in the curve defined by $H_s^{\prime}(X,Y)=0$. Another application of $u=q^{k_{M_s}}/q^t$
times $\theta$ gives
$$\overline{H}(X,Y)=Y^{q^t}+B_0X^{q^t-1}+\overline{L}(X,Y),$$
where $L$ is a linearized polynomial in $Y$ with all the degrees in $X$ larger than $q^t-1$. Now the assertion follows from point (1).
\end{enumerate}
\endproof

We now analyze the case in which $k_1=1$ and all the other $k_i\geq t$. Note that, using the same notation as in Lemma \ref{lem1}, $B_1\neq 0$, since $\xi\notin \mathbb{F}_q$.

\begin{lemma} \label{lem1_BIS}
Suppose $1=k_1<t<k_2<\cdots<k_M$, with $k_M\geq t+2$.
Let $R_{\xi}=(0,\xi)$, $\xi \in \mathbb{F}_{q^{k_M-t}}\setminus \mathbb{F}_{q^t}$, be a singular point of $\widetilde{ \mathcal{D}}_f$. Then there is a unique branch centered in $R_{\xi}$. Thus, the multiplicity of intersection of two putative components of $\mathcal{C}_f$ in $R_{\xi}$ is $0$.
\end{lemma}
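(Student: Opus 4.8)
The plan is to follow the architecture of Lemma \ref{lem1}. First I would move the singular point to the origin by setting $H(X,Y)=G(X,Y+\xi)$, so that, exactly as in the proof of Lemma \ref{lem1}, the tangent cone of $H$ at the origin is $Y^{q^t}$ and the origin has multiplicity $q^t$. One then reduces this multiplicity to $1$ by a sequence of quadratic transformations $\theta(H)=H(X,XY)/X^{q^t}$ interspersed with translations $\rho(X,Y)=(X,\alpha X+Y)$ cancelling the successive leading coefficients; once the tangent cone has degree $1$ there is a unique branch at the origin, and the intersection multiplicity of two putative components of $\mathcal{C}_f$ there is $0$.

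The only new feature compared with Lemma \ref{lem1} is the pair of monomials produced by the exponent $k_1=1<t$, namely $A_1X^{q^{k_M}-q}(Y^{q^t}-Y^q)$ together with $B_1X^{q^{k_M}-q}$, where $B_1=A_1(\xi^{q^t}-\xi^{q})\ne 0$. I would first run the reduction of Lemma \ref{lem1} using only the nonzero $B_i$ with $k_i\ge t$, processing them from the largest $k_i$ downwards just as in Steps 1--3; one checks that the two extra monomials keep a large degree throughout and stay inside the remainder term. The argument runs into the obstruction that forced a separate lemma precisely when it reaches $B_1$: the number of transformations $u=(q^{k_{M_{s-1}}}-q)/q^t$ that Lemma \ref{lem1} would prescribe fails to be an integer for $t\ge2$, so $B_1$ cannot be disposed of by the same pattern.

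At that stage the curve has the form $Y^{q^t}+B_0X^{e+q-1}+B_1X^{e}+cX^{d}Y^{q}+L(X,Y)$ with $e\equiv q^t-q\pmod{q^t}$ and $L$ of strictly larger degree, and I would finish by a direct computation in the spirit of Proposition \ref{branch2}: a few more applications of $\theta$ bring $B_1X^{e}$ down past degree $q^t$, lowering the multiplicity at the origin to $q^t-q$ with tangent cone a power of the line $X=0$; a transformation exchanging the roles of $X$ and $Y$ (as the map $\tau$ already used in the proof of Lemma \ref{lem1}) then turns this into a curve with tangent cone $Y^{q}$, and one further short chain of quadratic transformations lowers the tangent cone to degree $1$. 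The whole argument hinges on the monomial $cX^{d}Y^{q}$ coming from $A_1$: since its $Y$-degree $q$ is smaller than $q^t$, its $X$-degree decreases under $\theta$, and the main work is to verify that it never falls into the tangent cone during any of these steps. Tracking $d$ through the reduction, its eventual value is of the shape $q^t+q^{k_M+1-t}-2q$, which stays above the relevant thresholds precisely because $k_M\ge t+2$ --- this is the one place where that hypothesis is used, and controlling this monomial is the only real obstacle; the remainder is a routine adaptation of Lemma \ref{lem1}.
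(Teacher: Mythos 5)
Your overall architecture is exactly the paper's: run the machinery of Lemma \ref{lem1} on the $B_i$ with $k_i\ge t$, observe that it breaks down at $B_1$ because the prescribed number $(q^{k_{M_{s-1}}}-q)/q^t$ of transformations is not an integer, and finish by letting $B_1X^{q^t-q}$ become the tangent cone, swapping the roles of $X$ and $Y$, and reducing (after a translation killing the $X^q$ term) to the normal form of Proposition \ref{branch2}, whose induction supplies the unique branch. So the route is the right one, and your identification of why a separate lemma is needed is accurate.

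The concrete gap is in the degree bookkeeping that you yourself designate as ``the main work''. The monomial you track, $cX^{d}Y^{q}$ with $d=q^t+q^{k_M-t+1}-2q$ coming from $A_1$, is not the binding one: its total degree at that stage is $q^t+q^{k_M-t+1}-q>q^t$ for \emph{every} $k_M>t$, so it never threatens the tangent cone and cannot be where the hypothesis $k_M\ge t+2$ enters. The dangerous monomial is the one coming from $A_0$, namely $-A_0X^{q^t+q^{k_M-t}-2}Y$: its $Y$-degree is $1$, so its $X$-degree drops by $q^t-1$ (resp.\ $q-1$) under each application of $H\mapsto H(X,XY)/X^{q^t}$ (resp.\ of $H\mapsto H(X,XY)/X^{q}$), the fastest possible decay, and it is exactly this term that determines the minimal degree $q^{t+1}+q^t-q^{t-1}+q-1$ of the remainder $L(X,\alpha_1X+Y)$ needed to invoke Proposition \ref{branch2} --- the paper's parenthetical ``consider the case $k_M=t+2$ and $i=0$'' is precisely this computation. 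Without checking the $i=0$ term, the hypothesis of Proposition \ref{branch2} (all monomials of $L$ of degree at least $q^{t}+q-1$ after the final $q^{t-1}-q^{t-2}$ transformations) is not established. Relatedly, the ``one further short chain'' at the end is not short: after the translation one must exhibit the cross term $X^{q^{t-1}-q^{t-2}+q-1}Y$ (it arises from $B_0X^{q^t-q^{t-1}+1}(\alpha_1X+Y)^{q-1}$) and then run the full induction of Proposition \ref{branch2}; since that proposition is available, the fix is simply to route the endgame through it and to redo the degree estimate with the $Y$-degree-one monomial in place of the $Y^q$ one.
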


\proof We proceed as in Lemma \ref{lem1}. Now $H(X,Y)=G(X,Y+\xi)=G(X,Y)+G(X,\xi)$ reads 
\begin{eqnarray*}
&&Y^{q^t} -Y^{q^{k_M}}+B_0 X^{q^{k_M}-1}+B_1 X^{q^{k_M}-q}\nonumber \\
&&+\sum_{i=0}^{M-1} A_i X^{q^{k_M}-q^{k_i}} \left(Y^{q^t}-Y^{q^{k_i}}\right)+\sum_{i=2}^{M-1} B_i X^{q^{k_M}-q^{k_i}},
\end{eqnarray*}
where $\eta_i=\xi^{q^t}-\xi^{q^{k_i}}$ and $B_i=A_i \eta_i$. Recall that, since $\xi\notin \mathbb{F}_{q^t}$, $B_0\neq 0$.\\
{\bf Case  $B_2=\cdots=B_{M-1}=0$}. \\
We perform $u=q^{k_M-t}-1$ transformations $\theta(H(X,Y))=H(X,XY)/X^{q^t}$ and we get 
$$H_1=Y^{q^t} +B_0 X^{q^{t}-1}+B_1 X^{q^{t}-q}+Y^{q^t}\sum_{i=0}^{M-1} A_i X^{q^{k_M}-q^{k_i}}-\sum_{i=0}^{M} A_i X^{q^t+q^{k_i}(q^{k_M-t}-2)}Y^{q^{k_i}}.$$
 Now we perform one time $\eta(H(X,Y))=H(XY,Y)/Y^{q^t-q}$ and we get 
$$H_2=Y^{q} +B_0 X^{q^{t}-1}Y^{q-1}+B_1 X^{q^{t}-q}+ \sum_{i=0}^{M-1} A_i X^{q^{k_M}-q^{k_i}}Y^{q^{k_M}-q^{k_i}+q}-\sum_{i=0}^{M} A_i X^{q^t+q^{k_i}(q^{k_M-t}-2)}Y^{q+q^{k_i}(q^{k_M-t}-1)}.$$
It is readily seen that  all the branches centered at the origin in $H_1=0$ are mapped to branches centered at the origin in $H_2=0$.
Apply $v=q^{t-1}-2$ times $\theta(H(X,Y))=H(X,XY)/X^{q}$ obtaining 
$$H_3=Y^{q} +B_0 X^{q^{t}-q^{t-1}+1}Y^{q-1}+B_1 X^{q}+ L(X,Y).$$
Now $H_4=H_3(X,\alpha_1X+Y)$, where $\alpha_1^q+B_1=0$, reads 
$$H_4=Y^q+B_0X^{q^t-q^{t-1}+1}(\alpha_1X+Y)^{q-1}+L_2(X,\alpha_1X+Y).$$
All the monomials in $L(X,\alpha_1X+Y)$ have degree at least $q^{t+1}+q^t-q^{t-1}+q-1$ (consider the case $k_M=t+2$ and $i=0$). After $w=q^{t-1}-q^{t-2}$ applications of 
$\theta(H(X,Y))=H(X,XY)/X^{q}$
we have 
$$H_5=Y^q+B_0X^q-B_0\alpha_1^{q-2}X^{q^{t-1}-q^{t-2}+q-1}Y+\cdots$$
where the other terms have degree in $X$ at least $q^{t+1}+q-1$. By Proposition \ref{branch2} there is a unique branch centered at the origin.\\ 
{\bf Case  $B_i\neq 0$ for some $i>1$}. \\
Let $M_1=\max\{i>1 :  B_i\neq 0\}$. Such $M_1$ is well defined. We now consider steps as in the proof of Proposition \ref{lem1_BIS}. The main difference here is the presence of the monomial  $B_1X^{q^{k_M}-q}$. After 
{\bf Step 1.} this monomial is mapped to $B_1X^{q^{k_{M_1}}-q}$ and it is fixed by {\bf Step 2}. If $M_2=0$ then we use the same approach as in {\bf Case  $B_2=\cdots=B_{M-1}=0$} and the claim follows. Suppose now $M_2\neq 0$. We perform {\bf Step 3.2.1}, {\bf Step 3.2.2}, {\bf Step 3.2.3}, and {\bf Step 3.2.4}: $B_1X^{q^{k_{M_1}}-q}$ becomes $B_1X^{q^{k_{M_s}}-q}$. Now we proceed as in {\bf Case  $B_2=\cdots=B_{M-1}=0$} and the claim follows.
\endproof

\begin{lemma} \label{lem1_2}
Let $R_{\xi}=(0,\xi)$, $\xi \in  \mathbb{F}_{q^t}$, be a singular point of $\widetilde{ \mathcal{D}}_f$. If $k_i\geq t$ for each $i=1,\ldots,M$ then there is a unique branch centered in $R_{\xi}$. Thus, the multiplicity of intersection of two putative components of $\mathcal{C}_f$ in $R_{\xi}$ is $0$.

\end{lemma}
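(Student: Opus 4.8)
The plan is to imitate the proof of Lemma~\ref{lem1}, the one new feature being that $\xi\in\mathbb{F}_{q^t}$ annihilates the coefficients at both ends of the linearized part. First I would move $R_\xi$ to the origin by $H(X,Y)=G(X,Y+\xi)=G(X,Y)+G(X,\xi)$ (legitimate since $G$ is additive in its second variable), obtaining
\[
H(X,Y)=Y^{q^t}-Y^{q^{k_M}}+\sum_{i=0}^{M-1}A_iX^{q^{k_M}-q^{k_i}}\bigl(Y^{q^t}-Y^{q^{k_i}}\bigr)+\sum_{i=1}^{M-1}B_iX^{q^{k_M}-q^{k_i}},
\]
with $\eta_i=\xi^{q^t}-\xi^{q^{k_i}}$, $B_i=A_i\eta_i$ as before. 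Since $\xi^{q^t}=\xi$ one has $\eta_0=\eta_M=0$, hence $B_0=B_M=0$; since $f$ has no $X^{q^t}$-term (condition [C1]) and $k_i\ge t$, in fact $k_i\ge t+1$ for every $i=1,\dots,M$; and since $R_\xi$ lies in the second group, $B_i\ne 0$ for at least one $1\le i\le M-1$. A degree count now shows that every monomial of $H$ other than $Y^{q^t}$ has degree $>q^t$ (the cheapest ones being the $B_iX^{q^{k_M}-q^{k_i}}$, of degree $\ge q^{t+1}(q-1)>q^t$), so the tangent cone of the curve at the origin is $Y^{q^t}$.

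Then I would carry out the ``$B$-peeling'' of Lemma~\ref{lem1} almost verbatim: with $k_{M_1}>k_{M_2}>\dots>k_{M_s}$ the indices for which $B_{M_j}\ne 0$, alternate blocks of $\theta\colon H\mapsto H(X,XY)/X^{q^t}$ (of lengths $u_1=(q^{k_M}-q^{k_{M_1}})/q^t-1$ and $u_j=(q^{k_{M_{j-1}}}-q^{k_{M_j}})/q^t$ for $j\ge2$, all integers because each $k_{M_j}>t$) with the shears $\rho_j\colon(X,Y)\mapsto(X,\alpha_jX+Y)$, $\alpha_j^{q^t}=-B_{M_j}$, verifying at each stage — exactly as in Lemma~\ref{lem1} — that $0$ is still the only root of $H'_j(0,Y)$, so that no branch centred at the origin is lost. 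The only change is that the monomial $B_0X^{q^{k_M}-1}$ appearing there is now absent.

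The main obstacle is the endgame. In Lemma~\ref{lem1} the branch count was concluded from the monomial $B_0X^{q^t-1}$ (whose $X$-exponent is coprime to $p$) via the transformation $\tau$, and here $B_0=0$. My proposal is to use instead the term $A_0X^{q^{k_M}-1}Y$ — the $i=0$ summand of $-\sum_iA_iX^{q^{k_M}-q^{k_i}}Y^{q^{k_i}}$, which survives since $k_0=0$, and whose $X$-exponent $q^{k_M}-1$ is coprime to $p$. Its $Y$-exponent is $1<q^t$, so each $\theta$ strictly lowers its $X$-exponent; after the peeling this term, together with the pure powers of $X$ created when the $\rho_j$ act on the $Y^{q^t}$-monomials, becomes the cheapest monomial below $Y^{q^t}$, and a final block of $\theta$'s should bring the local equation into a shape covered by Proposition~\ref{branch2}, or else into the shape $AX^m+BY^{q^t}+\cdots$ of Proposition~\ref{branch} with $p\nmid m$; either way the origin carries a single branch, so two putative components of $\mathcal{C}_f$ meet at $R_\xi$ with multiplicity $0$. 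I expect the genuinely delicate point to be exactly this reduction: following all the exponents through the $\theta$-blocks and the shears precisely enough to certify the hypotheses of Proposition~\ref{branch2}/\ref{branch} — in particular that the surviving $X$-exponent is prime to $p$ — perhaps with a separate check when $p=2$.
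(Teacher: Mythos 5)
Your proposal is correct and follows essentially the same route as the paper: after noting $B_0=0$, the paper also peels off the $B_{M_j}$-terms with the same $\theta$/$\rho_j$ blocks and then exploits precisely the $i=0$ summand $A_0X^{q^{k_M}-1}Y$, which after the shears yields a pure power $A_0X^{\beta}$ with $\gcd(\beta,q^t)=1$, so that Proposition~\ref{branch} gives a single branch. The exponent bookkeeping you flag as the delicate point is exactly what the paper's displayed formulas for $H_1$ and $H_1'$ carry out, ending in $\widetilde H(X,Y)=Y^{q^t}+A_0X^{\beta}+\cdots$.
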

\proof
We proceed as in Lemma \ref{lem1_2}. The difference here is that $B_0=0$. Also, let $$j=\max\{i=1,\ldots,M-1 \ : \ B_i\neq 0\}.$$ Note that $B_j$ is well defined. So 
\begin{eqnarray}\label{Def:H_2}
H(X,Y)&=&G(X,Y+\xi)=G(X,Y)+G(X,\xi)=\sum_{i=0}^{M} A_i X^{q^{k_M}-q^{k_i}} \left(Y^{q^t}-Y^{q^{k_i}}+\eta_i\right)\nonumber \\
&=&Y^{q^t} -Y^{q^{k_M}}+\sum_{i=0}^{M-1} A_i X^{q^{k_M}-q^{k_i}} \left(Y^{q^t}-Y^{q^{k_i}}\right)+B_j X^{q^{k_M}-q^{k_j}} +\sum_{i=1}^{j-1} B_i X^{q^{k_M}-q^{k_i}},
\end{eqnarray}
where $\eta_i=\xi^{q^t}-\xi^{q^{k_i}}$ and $B_i=A_i \eta_i$.

We apply $u_1=\frac{q^{k_M}-q^{k_j}}{q^t}-1$ times the transformation $F(X,Y)\mapsto F(X,XY)/X^{q^t}$ and then

\begin{eqnarray}\label{Def:H1_2}
H_1(X,Y)&=&Y^{q^t} -Y^{q^{k_M}} X^{u_1(q^{k_M}-q^t)} +B_{j}X^{q^t}+\sum_{i=0}^{M-1} A_i X^{q^{k_M}-q^{k_i}} Y^{q^t}\nonumber \\
&&+A_0X^{q^{k_j}+q^t+q^{k_M-t}-q^{k_j-t}-2}Y-\sum_{i=1}^{M-1} A_i X^{q^{k_{j}}+q^t+(u_1-1)q^{k_i}} Y^{q^{k_i}}+\sum_{i=1}^{j-1} B_i X^{q^{k_{j}}+q^t-q^{k_i}}.
\end{eqnarray}
Let $\rho_1(X,Y)=(X,\alpha_1 X+Y)$ such that $\alpha_1^{q^t}+B_{j}=0$. After applying $\rho_1$ one gets 
\begin{eqnarray}\label{Def:H1primo_2}
H^{\prime}_1(X,Y)&=&Y^{q^t} +\sum_{i=1}^{j-1} B_i X^{q^{k_{j}}+q^t-q^{k_i}}+A_0 X^{q^{k_j}+q^t+q^{k_M-t}-q^{k_j-t}-1}\\ \nonumber
&&+A_0 X^{q^{k_j}+q^t+q^{k_M-t}-q^{k_j-t}-2}Y+L(X,Y),
\end{eqnarray}
where $L(X,Y)$ is a polynomial containing only terms of degree in $X$ larger than $q^{k_j}+q^t+q^{k_M-t}-q^{k_j-t}-1$. Note that $q^t\mid (q^{k_{j}}+q^t-q^{k_i})$. Suppose that the indices $i$ such that $B_i\neq0$ are ordered as
$$i_1<i_2<\cdots<i_s=j.$$
We continue performing each time $$\frac{q^{k_{i_\ell}}-q^{k_{i_{\ell-1}}}}{q^t}$$
times $F(X,Y)\mapsto F(X,XY)/X^{q^t}$ and $\rho_1(X,Y)=(X,\alpha_1 X+Y)$. Doing so, in a similar way as in Lemma \ref{lem1} we obtain 
$$\widetilde H(X,Y)=Y^{q^t}+A_0X^{\beta}+\cdots$$
where $(\beta,q^t)=1$. We now apply Proposition \ref{branch} and we deduce that there is a unique branch centered at the origin. 
\endproof

This completes the analysis of the points $R_\xi$ for $k_i\geq t$, $i=1,\ldots,M$. The following proposition will be used to study the point $S_1$.


\begin{lemma} \label{lem1_2BIS}
Suppose $1=k_1<t<k_2<\cdots<k_M$, with $k_M\geq t+2$. Let $R_{\xi}=(0,\xi)$, $\xi \in  \mathbb{F}_{q^t}$, be a singular point of $\widetilde{ \mathcal{D}}_f$. Then there is a unique branch centered in it. Thus, the multiplicity of intersection of two putative components of $\mathcal{C}_f$ in the corresponding point is $0$.
\end{lemma}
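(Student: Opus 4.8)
The plan is to run the argument of Lemma \ref{lem1_2}, while handling the extra low $q$-degree monomial forced by $k_1=1$ exactly as in Lemma \ref{lem1_BIS}. After translating $R_\xi$ to the origin I would write $H(X,Y)=G(X,Y+\xi)=G(X,Y)+G(X,\xi)$; since $\xi\in\mathbb{F}_{q^t}$ one has $\eta_0=\xi^{q^t}-\xi=0$, so $B_0=0$, and since $\xi\notin\mathbb{F}_q$ and $k_1=1$ one has $B_1=A_1(\xi^{q^t}-\xi^q)\neq0$. Thus
\[
H(X,Y)=Y^{q^t}-Y^{q^{k_M}}+\sum_{i=0}^{M-1}A_iX^{q^{k_M}-q^{k_i}}\bigl(Y^{q^t}-Y^{q^{k_i}}\bigr)+B_1X^{q^{k_M}-q}+\sum_{i=2}^{M-1}B_iX^{q^{k_M}-q^{k_i}},
\]
and, because $k_M\geq t+2$ forces every monomial other than $Y^{q^t}$ to have total degree larger than $q^t$, the tangent cone of the curve $H=0$ at the origin equals $Y^{q^t}$. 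Note that the $i=0$ summand contributes the monomial $-A_0X^{q^{k_M}-1}Y$, with $A_0\neq0$ by Condition [C2].

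First I would settle the case in which $B_1$ is the only nonzero $B_i$. As in the corresponding case of Lemma \ref{lem1_BIS}, applying $\theta(H)=H(X,XY)/X^{q^t}$ exactly $q^{k_M-t}-1$ times pushes $B_1X^{q^{k_M}-q}$ down to $B_1X^{q^t-q}$ and fixes $Y^{q^t}$; one application of $\eta(H)=H(XY,Y)/Y^{q^t-q}$ then turns $Y^{q^t}$ into $Y^q$ while fixing $B_1X^{q^t-q}$, and a bounded number of applications of $H\mapsto H(X,XY)/X^q$ brings the surviving pure power of $X$ to $B_1X^q$. The translation $(X,Y)\mapsto(X,\alpha_1X+Y)$ with $\alpha_1^q+B_1=0$ makes the $X^q$ term cancel against $Y^q$. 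Since $B_0=0$ here, the monomial that then resolves the singularity is not a $B_0$-term, as it was in Lemma \ref{lem1_BIS}, but the image of $-A_0X^{q^{k_M}-1}Y$, which under this translation becomes a pure power of $X$ of exponent coprime to $q$, up to a monomial $cX^{\ast}Y$ that does not interfere with what follows. A further bounded number of applications of $H\mapsto H(X,XY)/X^q$, followed by one blow-up in the $X$-direction, reduces the tangent cone at the origin to degree one; equivalently, one reaches a curve to which Proposition \ref{branch2} applies. In either case there is a unique branch centered at $R_\xi$, whence the intersection multiplicity of two putative components of $\mathcal{C}_f$ there is $0$.

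It remains to treat the case in which $M_1:=\max\{2\leq i\leq M-1:B_i\neq0\}$ exists. Here I would run Steps 1--3 of Lemma \ref{lem1} (equivalently of Lemma \ref{lem1_2}): an alternating chain of $\theta$-transformations and translations $\rho_j(X,Y)=(X,\alpha_jX+Y)$ with $\alpha_j^{q^t}+B_{M_j}=0$, indexed by the decreasing sequence $k_{M_1}>k_{M_2}>\cdots$ of exponents carrying a nonzero $B$. As in Lemma \ref{lem1_BIS}, the monomial $B_1X^{q^{k_M}-q}$ is transported to $B_1X^{q^{k_{M_1}}-q}$ by Step 1, is untouched by the translations, and at the end of the chain becomes $B_1X^{q^{k_{M_s}}-q}$; since $B_1\neq0$ the last exponent is $k_{M_s}=k_1=1$, so one lands in the case already treated. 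Throughout, $B_0=0$ guarantees that $0$ remains the unique root of the restriction to $X=0$ after every transformation, so the branches centered at the origin are neither created nor lost; hence there is again exactly one.

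I expect the main difficulty to lie in the monomial bookkeeping along the whole chain of quadratic transformations and translations: one must check that at each stage nothing competes with $Y^{q^t}$, and later with $Y^q$, for minimal degree except the designated $B_1$-monomial, that the image of $-A_0X^{q^{k_M}-1}Y$ indeed survives as a pure power of $X$ with exponent coprime to $q$, and that all remaining monomials retain $X$-degree above the threshold required for the final branch count. This is precisely where the hypothesis $k_M\geq t+2$ is used, as it provides the degree slack that disappears when $k_M=t+1$.
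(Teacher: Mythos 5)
Your proposal takes essentially the same route as the paper: the paper's own proof of this lemma is a one-line reduction to Lemma \ref{lem1_BIS} with the remark that $B_0=0$ does not affect the computations, and you run exactly that machinery. In fact you are more precise than the paper on the one point where $B_0=0$ \emph{does} matter --- namely that the terms feeding Proposition \ref{branch2} in Lemma \ref{lem1_BIS} come from the $B_0$-monomial, so here the resolving monomial must instead be supplied by the $A_0$-term $-A_0X^{q^{k_M}-1}Y$, exactly as in the endgame of Lemma \ref{lem1_2}. The only slip is the claim that the chain of translations ends at $k_{M_s}=k_1=1$: the indices $M_j$ range over $i\geq 2$ with $B_i\neq 0$, so $k_{M_s}\geq 2$; this does not affect your argument, since the correct conclusion is simply that after the chain one reverts to the case $B_2=\cdots=B_{M-1}=0$, which is what you do.
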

\proof
Recall that since $\xi\notin \mathbb{F}_q$, $B_1\neq 0$. The proof is the same as the one in Proposition \ref{lem1_BIS}, since $B_0=0$ does not affect the computations.  
\endproof

The following lemma deals with the points $S_\xi$ (and therefore with the points $Q_\xi$). Here we do not assume that $k_i>t$ for $i>0$.
\begin{lemma} \label{lem2}
Let $S_1=(0,1)\in \widetilde{\mathcal{D}}_f$ and $k_M \geq t$.
\begin{itemize}
    \item If $t\mid k_M$ then there are $q^t$ branches centered at $S_1$.
    \item If $k_M=tr+s$, with $s\in \{1,\ldots,t-1\}$ then there are $q^{(s,t)}$ branches centered at $S_1$.
\end{itemize}
The maximum possible intersection multiplicity of two components of $\widetilde{\mathcal{D}}_f$ at $S_1$ is $\frac{q^{k_M+t}}{4}.$
\end{lemma}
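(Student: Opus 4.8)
The plan is to find all branches of $\widetilde{\mathcal{D}}_f$ centred at $S_1$ by a controlled sequence of local quadratic transformations, and then to deduce the intersection estimate from Noether's recursion for intersection multiplicity. First I would move $S_1$ to the origin: since $(Y+1)^{q^j}=Y^{q^j}+1$ in characteristic $p$ one has $G(X,Y+1)=G(X,Y)$, so it suffices to analyse the branches at the origin of $G(X,Y)=\sum_{i=0}^{M}A_iX^{q^{k_M}-q^{k_i}}(Y^{q^t}-Y^{q^{k_i}})$. Each monomial of $G$ other than the term $Y^{q^t}$ coming from $i=M$ has total degree $q^{k_M}-q^{k_i}+\min(q^t,q^{k_i})>q^t$ (recall $k_M>t$, since by [C1] the coefficient of $X^{q^t}$ vanishes); hence the origin has multiplicity $q^t$ and tangent cone $Y^{q^t}$, so every branch there is tangent to $Y=0$.

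Next I would iterate $\theta\colon F\mapsto F(X,XY)/X^{q^t}$, which fixes every monomial $X^aY^{q^t}$ and sends $X^aY^{q^{k_i}}$ to $X^{a+q^{k_i}-q^t}Y^{q^{k_i}}$; thus after $\ell$ steps $G_\ell=\sum_iA_iX^{q^{k_M}-q^{k_i}}Y^{q^t}-\sum_iA_iX^{q^{k_M}-q^{k_i}+\ell(q^{k_i}-q^t)}Y^{q^{k_i}}$. Among the descending monomials the one coming from $k_0=0$, namely $-A_0X^{q^{k_M}-1-\ell(q^t-1)}Y$ of total degree $q^{k_M}-\ell(q^t-1)$, falls fastest; as long as this degree exceeds $q^t$ the tangent cone remains $Y^{q^t}$, the multiplicity remains $q^t$, and — there being only one tangent direction — all branches stay centred at the origin after each blow-up. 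The governing threshold is $\ell^{\ast}:=(q^{k_M}-q^t)/(q^t-1)$. A Newton-polygon check shows that no intermediate term ($0<k_i<k_M$) ever attains a smaller total degree during this process; verifying this is the main technical point.

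If $t\mid k_M$ then $q^t-1\mid q^{k_M}-1$, so $\ell^{\ast}\in\mathbb{N}$, and after exactly $\ell^{\ast}$ applications of $\theta$ the tangent cone becomes $Y^{q^t}-A_0X^{q^t-1}Y=Y\prod_{c^{q^t-1}=A_0}(Y-cX)$, a product of $q^t$ pairwise distinct lines (distinct since $p\nmid q^t-1$); hence the origin of $G_{\ell^{\ast}}$ is an ordinary $q^t$-fold point, so $S_1$ carries exactly $q^t$ branches. If instead $k_M=tr+s$ with $1\le s\le t-1$, then $\ell^{\ast}\notin\mathbb{N}$: here I would factor $\widetilde{\mathcal{D}}_f=Y\cdot\mathcal{C}_f$ near $S_1$ (the line $Y=0$ being a trivial component, since $Y\mid G$) and resolve $\mathcal{C}_f$, whose origin has multiplicity $q^t-1$ and tangent cone $Y^{q^t-1}$. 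After $\lceil\ell^{\ast}\rceil$ blow-ups a pure power $A_0X^{e}$ with $1\le e<q^t-1$ appears, the tangent cone becomes $A_0X^{e}$, and one switches to blowing up along $X=0$; the exponents then follow the Euclidean algorithm for $\gcd(k_M,t)=\gcd(s,t)$ until the curve takes the shape of Proposition \ref{branch} with $\gcd(m,n)=q^{(s,t)}-1$ (and $p\nmid q^{(s,t)}-1$), which gives $q^{(s,t)}-1$ branches of $\mathcal{C}_f$, hence $q^{(s,t)}$ branches of $\widetilde{\mathcal{D}}_f$, at $S_1$.

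For the intersection estimate, let $\mathcal{A},\mathcal{B}$ be the two putative components of $\mathcal{C}_f$ passing through $S_1$, where $\mathcal{C}_f$ has multiplicity $q^t-1$. By Noether's formula $I(S_1,\mathcal{A}\cap\mathcal{B})=\sum_{P'}m_{P'}(\mathcal{A})m_{P'}(\mathcal{B})$, the sum over the infinitely near points of $S_1$; by the resolution above every $P'$ contributing a nonzero term lies on the chain of blow-up centres, of which there are at most $\ell^{\ast}+1=(q^{k_M}-1)/(q^t-1)$ when $t\mid k_M$ (an analogous, slightly longer count handles $t\nmid k_M$), at each such $P'$ one has $m_{P'}(\mathcal{A})+m_{P'}(\mathcal{B})=q^t-1$, and above the terminal ordinary point $\mathcal{A}$ and $\mathcal{B}$ separate (distinct branches have distinct tangents there), contributing nothing more. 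Hence each summand is at most $\lfloor(q^t-1)^2/4\rfloor$, and
\[
I(S_1,\mathcal{A}\cap\mathcal{B})\ \le\ \frac{(q^{k_M}-1)(q^t-1)}{4}\ <\ \frac{q^{k_M+t}}{4}.
\]
The hard part throughout is the bookkeeping: checking that the multiplicity is exactly $q^t$ (resp.\ $q^t-1$) at every intermediate centre of blow-up, that the intermediate $k_i$-terms never interfere with the tangent cone, and that above the terminal point the two components really do separate, so that the last contribution is just $m(\mathcal{A})m(\mathcal{B})$.
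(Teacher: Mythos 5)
Your branch count follows essentially the same route as the paper: translate $S_1$ to the origin, iterate $\theta\colon F\mapsto F(X,XY)/X^{q^t}$ until the term coming from $k_0=0$ reaches the tangent cone, and conclude via an ordinary $q^t$-fold point when $t\mid k_M$ and via Proposition \ref{branch} (after peeling off the linear branch $Y=0$) when $t\nmid k_M$; the numbers $q^t$ and $q^{(s,t)}$ come out the same way. Where you genuinely diverge is the intersection bound. The paper writes down explicit primitive representations $Z_i=(t,\eta_i t^{\alpha}+\cdots)$ (resp.\ $(t^{\alpha},\eta_i t^{\beta}+\cdots)$) of the branches, evaluates the putative factors $U=Y^m+U_0$ and $V=Y^{q^t-m}+V_0$ on them, and shows that exactly $m$ of the $\eta_i$ satisfy the resulting polynomial condition for $\mathcal{X}$, whence the bound $(q^t-m)m\beta\le q^{k_M+t}/4$. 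You instead invoke Noether's recursion $I(S_1,\mathcal{A}\cap\mathcal{B})=\sum_{P'}m_{P'}(\mathcal{A})m_{P'}(\mathcal{B})$ over the single chain of infinitely near points (single because the tangent cone is a power of one line), bounding each summand by $(q^t-1)^2/4$ and the length of the chain by $(q^{k_M}-1)/(q^t-1)$. This is a legitimate and arguably cleaner argument; it yields the marginally sharper bound $(q^{k_M}-1)(q^t-1)/4$ and avoids the explicit parametrization. What it loses is the extra information the paper extracts (exactly $m$ branches lie on one component and $q^t-m$ on the other), which is what makes the paper's estimate tight branch by branch and is reused in spirit elsewhere.

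The one place you owe more is the case $t\nmid k_M$ of the intersection bound, which you dismiss as ``an analogous, slightly longer count''. There the resolution does not terminate after the first chain: once the pure power of $X$ drops below $q^t-1$ the multiplicity falls to $q^s-1$ and the Euclidean algorithm on $(q^{k_M}-1,q^t-1)$ continues, so the chain of infinitely near points is longer and the summands are no longer all bounded by the same quantity. The count does close --- for the model singularity $Y^{n}=X^{m}$ one has $\sum_{P'}m_{P'}^2=mn$, which with $m=q^{k_M}-1$ and $n=q^t-1$ gives the same bound $(q^{k_M}-1)(q^t-1)/4$ --- but this identity, together with the verification that the perturbing terms $A_iX^{q^{k_M}-q^{k_i}}Y^{q^{k_i}}$ do not alter the multiplicity sequence during the Euclidean phase, is precisely the bookkeeping the paper carries out by hand with the floors $\lfloor m/\alpha\rfloor$ and $\lfloor (q^t-m)/\alpha\rfloor$; as written, your proof asserts it rather than proves it. The same remark applies, more mildly, to the ``Newton-polygon check'' in the first phase, which you flag as the main technical point but do not carry out; it is in fact immediate, since for $k_i\ge 1$ the descending term has total degree $q^{k_M}-\ell(q^t-q^{k_i})>q^{k_M}-\ell(q^t-1)$, so only the $k_0$ term ever reaches the tangent cone.
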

\begin{proof}
Following the same notations as in Lemma \ref{lem1}, 
\begin{eqnarray*}
H(X,Y)&=&Y^{q^t}-Y^{q^{k_M}}+Y^{q^t}\sum_{i=0}^{M-1}A_iX^{q^{k_M}-q^{k_i}}-\sum_{i=0}^{M-1}A_iX^{q^{k_M}-q^{k_i}}Y^{q^{k_i}}.
\end{eqnarray*}
We distinguish two cases.
\begin{itemize}
    \item $t\mid k_M$. We perform $(q^{k_M}-1)/(q^t-1)-1$ times $\theta$ and we get
    $$\widetilde{H}(X,Y)=Y^{q^t}-A_0YX^{q^t-1}+\cdots.$$
Hence there are $q^t$ branches at the $q^t$-singular point $S_1$. All the branches centered at the origin in $\mathcal{C}: H(X,Y)=0$ are
    $$Z_i=\left(t,\eta_i t^{\alpha}+\delta \right),$$
    where $\alpha=(q^{k_M}-1)/(q^{t}-1)$, $\eta_i\in \mathbb{F}_{q^{k_M-t}}^*$, and  $\deg_t(\delta)>\alpha$. Suppose now that the curve $\mathcal{C}$ splits into two components $\mathcal{X}$ and $\mathcal{Y}$ sharing no common irreducible component. It follows that $\mathcal{X}$ and $\mathcal{Y}$ have no branches in common. Let $U(X,Y)$ and $V(X,Y)$ be two polynomials defining the components $\mathcal{X}$ and $\mathcal{Y}$ such that $U$ and $V$ have no common factors. Then
$$U(X,Y)=Y^m+U_0(X,Y), \ {\rm and} \ V(X,Y)=Y^{q^t-m}+V_0(X,Y),$$
where $0 \leq m \leq q^t$, $\deg(U_0)>m$ and $\deg(V_0)>q^t-m$.

Our aim is to compute the intersection multiplicity of $\mathcal{X}$ and $\mathcal{Y}$ at the origin. For a  branch $Z_i$ contained in $\mathcal{X}$ it follows that the coefficient of the term  degree $m\alpha$ (in $t$) in $U(Z_i)$ vanishes, that is, 
$$\eta_i^m+\sum_{r=0}^{m} \gamma_r\eta_i^{m-r}=0,$$
where the monomials  $\gamma_rX^{r\alpha}Y^{m-r}$ belong to $U(X,Y)$. Analogously, if a branch $Z_j$ belongs to $\mathcal{Y}$ then  
$$\eta_i^{q^t-m}+\sum_{r=0}^{q^t-m} \bar{\gamma}_r\eta_i^{q^t-m-r}=0,$$
where the monomials  $\overline{\gamma}_rX^{r\alpha}Y^{q^t-m-r}$ belong to $V(X,Y)$,
since the coefficient of the term  degree $(q^t-m)\alpha$ (in $t$) in $V(Z_j)$ vanishes. Therefore, since  $\eta_i,\eta_j\neq 0$, and there are exactly $q^{t}-1$ branches corresponding to distinct $\eta_i$, exactly $m$ of them belong to $\mathcal{X}$ and $q^t-m$ to $\mathcal{Y}$. Hence if $Z_i$ belongs to $\mathcal{X}$, then $Z_i$ does not belong to $\mathcal{Y}$. So the multiplicity of intersection at the origin of two putative components of $\mathcal{C}$ is given by
$$(q^t-m)m \beta\leq \frac{q^{2t}}{4}q^{k_M-t}=\frac{q^{k_M+t}}{4}.$$
    
    \item $t\nmid k_M$. Let $k_M=tr+s$, with $s\in \{1,\ldots,t-1\}$. We perform $q^s(q^{k_M-s}-1)/(q^t-1)$ times $\theta$ and we get
    $$\widetilde{H}(X,Y)=Y^{q^t}-A_0YX^{q^s-1}+X^{q^t}Y^{q^t}L(X,Y)\cdots,$$
    for some $L(X,Y)$. Apart from the branch with tangent line $Y=0$, the other branches centered at the origin correspond to the branches centered at the origin for 
    $$Y^{q^t-1}-A_0X^{q^s-1}+X^{q^t}Y^{q^t-1}L(X,Y)\cdots=0.$$
    By Proposition  \ref{curvaprop} 
     there are other $q^{(s,t)}-1$ branches.
    All the branches centered at the origin in $\mathcal{C}: H(X,Y)=0$ are
    $$Z_i=\left(t^{\alpha},\eta_i t^{\beta}+\delta \right),$$
    where $\alpha=(q^{t}-1)/(q^{(t,s)}-1)$, $\beta=(q^{k_M}-1)/(q^{(t,s)}-1)$, $\eta_i\in \mathbb{F}_{q^{(s,t)}}^*$, and  $\deg_t(\delta)>\beta$. Suppose now that the curve $\mathcal{C}$ splits into two components $\mathcal{X}$ and $\mathcal{Y}$ sharing no common irreducible component. It follows that $\mathcal{X}$ and $\mathcal{Y}$ have no branches in common. Let $U(X,Y)$ and $V(X,Y)$ be two polynomials defining the components $\mathcal{X}$ and $\mathcal{Y}$ such that $U$ and $V$ have no common factors. Then
$$U(X,Y)=Y^m+U_0(X,Y), \ {\rm and} \ V(X,Y)=Y^{q^t-m}+V_0(X,Y),$$
where $0 \leq m \leq q^t$, $\deg(U_0)>m$ and $\deg(V_0)>q^t-m$.

Our aim is to compute the intersection multiplicity of $\mathcal{X}$ and $\mathcal{Y}$ at the origin. For a  branch $Z_i$ contained in $\mathcal{X}$ it follows that the coefficient of the term  degree $m\beta$ (in $t$) in $U(Z_i)$ vanishes, that is, 
$$\eta_i^m+\sum_{r=0}^{\lfloor m/\alpha\rfloor} \gamma_r\eta_i^{m-r\alpha}=\eta_i^{m-r\lfloor m/\alpha\rfloor}\sum_{r=0}^{\lfloor m/\alpha\rfloor} \gamma_{\lfloor m/\alpha\rfloor-r}\eta_i^{r\alpha}=\eta_i^{m-r\lfloor m/\alpha\rfloor}\sum_{r=0}^{\lfloor m/\alpha\rfloor} \gamma_{\lfloor m/\alpha\rfloor-r}\left(\eta_i ^{\alpha}\right)^{r}=0,$$
where the monomials  $\gamma_rX^{r\beta}Y^{m-r\alpha}$ belong to $U(X,Y)$. Analogously, if a branch $Z_j$ belongs to $\mathcal{Y}$ then  
$$\eta_j^{q^t-m-r\lfloor (q^t-m)/\alpha\rfloor}\sum_{r=0}^{\lfloor (q^t-m)/\alpha\rfloor} \overline{\gamma}_{\lfloor (q^t-m)/\alpha\rfloor-r}\left(\eta_j^{\alpha}\right)^{r}=0,$$
where the monomials  $\overline{\gamma}_rX^{r\beta}Y^{q^t-m-r\alpha}$ belong to $V(X,Y)$,
since the coefficient of the term  degree $(q^t-m)\beta$ (in $t$) in $V(Z_j)$ vanishes. Therefore, since  $\eta_i,\eta_j\neq 0$, and there are exactly $q^{(s,t)-1}$ branches corresponding to distinct $\eta_i$, exactly $\lfloor m/\alpha\rfloor$ of them belong to $\mathcal{X}$ and $\lfloor (q^t-m)/\alpha\rfloor$ to $\mathcal{Y}$. In particular, noting that $X^\alpha$ is a permutation of $\mathbb{F}_{q^{(s,t)}}$, $Z_i$ belongs to $\mathcal{X}$ if and only if $$G(\eta_i)=\sum_{r=0}^{\lfloor m/\alpha\rfloor} \gamma_{\lfloor m/\alpha\rfloor-r}\eta_i^{r}=0$$ and to $\mathcal{Y}$ if and only if $$\overline{G}(\eta_i)=\sum_{r=0}^{\lfloor (q^t-m)/\alpha\rfloor} \overline{\gamma}_{\lfloor (q^t-m)/\alpha\rfloor-r}\eta_i^{r}=0.$$ Suppose now that $Z_i$ belongs to $\mathcal{X}$, then $\overline{G}(\eta_i)\neq 0$ and the coefficient of the term in $t$ of degree $(q^t-m)\beta$ in $V(Z_i)$ does not vanish. So the multiplicity of intersection at the origin of two putative components of $\mathcal{C}$ is given by
$$(q^t-m)\left\lfloor \frac{m}{\alpha}\right\rfloor \beta\leq \frac{q^{2t}}{4}q^{k_M-t}=\frac{q^{k_M+t}}{4}.$$
\end{itemize}

\end{proof}

\begin{proposition} \label{fine}
Let $t \geq 2$ be a natural number, $f(X)=\sum_{i=0}^{M}A_i X^{k_i} \in \mathbb{F}_{q^r}[X]$ where $k_0=0$, $A_M=1$ and $k_i \geq t$ for $i \geq 2$. Let $\mathcal{C}_f$ be the algebraic curve associated with $f$ as in Lemma \ref{le:link}. If $t \mid k_M$ and $k_M \geq 3t$ or $t \nmid k_M$ and $k_M \geq 2t-1$ then $\mathcal{C}_f$ has an absolutely irreducible component defined over $\mathbb{F}_{q^r}$. In particular, $f(X)$ is not exceptional scattered.
\end{proposition}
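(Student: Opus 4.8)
The plan is to run the four–step programme of Section~\ref{Sec:Machinery} on the plane curve $\mathcal C_f$, which has degree $N=q^{k_M}+q^{t}-q-1$ (the numerator of \eqref{eq:curve_condition} has degree $q^{k_M}+q^{t}$ and $X^{q}Y-XY^{q}$ has degree $q+1$). Assume, for a contradiction, that $\mathcal C_f$ has no absolutely irreducible component defined over $\mathbb F_{q^r}$; we may assume $\mathcal C_f$ reduced. Choosing an absolutely irreducible component $\mathcal A$ of minimal degree $d$ over $\overline{\mathbb F_q}$ and letting $\mathcal B$ be the union of the remaining components, we get $\mathcal C_f=\mathcal A\cup\mathcal B$ with no common component and $2\le d\le N-d$: here $d\ge 2$ because a direct inspection of $F(X,Y,T)=f(X,T)Y^{q^{t}}-f(Y,T)X^{q^{t}}$ shows that its only linear factors are $X$, $Y$ and the $X-cY$ with $c\in\mathbb F_q$, all of which divide $X^{q}Y-XY^{q}$ and so are not components of $\mathcal C_f$.

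First I would list the singular points of $\mathcal C_f$ and bound $\sum_P I(P,\mathcal A\cap\mathcal B)$. The affine origin is an ordinary point of multiplicity $q^{t}-q-1$, contributing at most $(q^{t}-q-1)^{2}/4$. All other singular points lie on the line at infinity $L_\infty$; passing to $\widetilde{\mathcal D}_f$ they split into the points $R_\xi$ and the points $S_\xi,Q_\xi$, the latter all projectively equivalent to $S_1$. Since $k_i\ge t$ for $i\ge 2$, the points $R_\xi$ contribute $0$: use Lemma~\ref{lem1} and Lemma~\ref{lem1_2} when $k_1>t$, and Lemma~\ref{lem1_BIS} and Lemma~\ref{lem1_2BIS} when $k_1=1$. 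At the at most $q^{\gcd(t,k_1,\ldots,k_M)}+1\le q^{t}+1$ points of type $S_1$, Lemma~\ref{lem2} shows that $\mathcal C_f$ has $q^{t}-1$ linear branches there (resp.\ $q^{(s,t)}-1$ when $k_M=tr+s$), any two meeting with multiplicity $q^{k_M-t}$; so if $\mathcal A$ carries $m_j$ of the branches at the $j$-th such point, the contribution there is $m_j(q^{t}-1-m_j)\,q^{k_M-t}$.

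The key extra input is that $L_\infty$ is not a component of $\mathcal A$, so $\sum_{P\in L_\infty}\mathrm{mult}_P(\mathcal A)=\sum_{P}I(P,\mathcal A\cap L_\infty)=\deg\mathcal A=d$, whence $\sum_j m_j\le d$. Using $m_j(q^{t}-1-m_j)\le(q^{t}-1)m_j$ and summing,
\[
\sum_P I(P,\mathcal A\cap\mathcal B)\ \le\ \tfrac14(q^{t}-q-1)^{2}+(q^{t}-1)q^{k_M-t}d .
\]
By Bézout (Theorem~\ref{th:bezout}) the left side equals $d(N-d)$, and after simplification this becomes
\[
4d\bigl(q^{k_M-t}+q^{t}-q-1-d\bigr)\ \le\ (q^{t}-q-1)^{2}.
\]
If $d\le q^{k_M-t}$, the left side is at least $4d(q^{t}-q-1)$, so $4d\le q^{t}-q-1$ and then $4d\,q^{k_M-t}<(q^{t}-q-1)^{2}<q^{2t}$, i.e.\ $d<q^{\,3t-k_M}/4\le\tfrac14$ once $k_M\ge 3t$ — impossible. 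If $d>q^{k_M-t}$, then $\mathcal C_f$ has no component of degree $\le q^{k_M-t}$, hence at most about $q^{t}$ components, and $d(N-d)\ge q^{k_M-t}(N-q^{k_M-t})$ is large; bounding the total intersection number more crudely by $\tfrac14(q^{t}-q-1)^{2}+(q^{t}+1)\tfrac14 q^{k_M+t}$ and comparing with this lower bound gives a contradiction under $k_M\ge 3t$ (and, when $t\nmid k_M$, the branch counts $q^{(s,t)}-1$ — with $(s,t)$ a proper divisor of $t$ — are so much smaller that the weaker hypothesis $k_M\ge 2t-1$ already suffices). Therefore $\mathcal C_f$ has an absolutely irreducible component defined over $\mathbb F_{q^r}$; this component is a fortiori defined over each $\mathbb F_{q^{rm}}$, so for all large $m$ the Hasse--Weil bound produces on it an $\mathbb F_{q^{rm}}$-rational affine point with $y/x\notin\mathbb F_q$, and Lemma~\ref{le:link} then shows $U_m$ is not maximum scattered. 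Hence $f$ is not exceptional scattered.

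The step I expect to be the main obstacle is the branch analysis feeding Step~2: determining exactly how many branches $\mathcal C_f$ has at each $S_1$-type point (the $Q_\xi$ need not exist, and for $t\nmid k_M$ the count collapses to $q^{(s,t)}-1$) and with what pairwise intersection multiplicity, and then doing the bookkeeping carefully enough that the final numerical inequality — split into the two regimes $d\le q^{k_M-t}$ and $d>q^{k_M-t}$ — leaves no leftover small cases. This is precisely where the thresholds $k_M\ge 3t$ and $k_M\ge 2t-1$ are forced.
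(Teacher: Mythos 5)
Your overall architecture (four-step programme, origin contributing $(q^t-q-1)^2/4$, the $R_\xi$ contributing $0$ by Lemmas \ref{lem1}--\ref{lem1_2BIS}, the $S_1$-type points controlled by Lemma \ref{lem2}, Hasse--Weil at the end) matches the paper, but your decomposition step is genuinely different: you split $\mathcal C_f$ into a \emph{minimal-degree} absolutely irreducible component $\mathcal A$ and the rest, and try to compensate for the possibly very unbalanced product $d(N-d)$ by the nice observation that $\sum_j m_j\le I(\mathcal A\cap L_\infty)=d$. Unfortunately this refinement does not survive a correct count of the branch intersection multiplicities, and your Case $d\le q^{k_M-t}$ collapses.

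Concretely: two branches $Z_i=(t,\eta_i t^{\alpha}+\cdots)$, $Z_j=(t,\eta_j t^{\alpha}+\cdots)$ at an $S_1$-type point meet with multiplicity $\alpha=\frac{q^{k_M}-1}{q^t-1}=q^{k_M-t}+q^{k_M-2t}+\cdots+1$, not $q^{k_M-t}$. Redoing your estimate with the correct value, the contribution of the $S_1$-type points is at most $\alpha\sum_j m_j(q^t-1-m_j)\le \alpha(q^t-1)\sum_j m_j\le (q^{k_M}-1)d$, and since $N-(q^{k_M}-1)=q^t-q$, B\'ezout now only yields
\[
4d\bigl(q^t-q-d\bigr)\ \le\ (q^t-q-1)^2 ,
\]
the term $q^{k_M-t}$ having been cancelled exactly. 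This inequality is satisfied for every $d$ outside a window of width about $2\sqrt d$ around $(q^t-q)/2$; in particular it gives no contradiction for $d=2,3,\dots$, which is precisely the regime your Case 1 was supposed to kill. The error looks like harmless rounding, but the final inequality is sensitive to it at exactly the order that matters: the coefficient of $d$ on the right jumps from $q^{k_M}-q^{k_M-t}$ to $q^{k_M}-1$, absorbing the whole main term of $N$. (Your Case 2, which uses the paper's crude per-point bound $\tfrac14 q^{k_M+t}$ at the at most $q^t+1$ points together with $d(N-d)\ge q^{k_M-t}(N-q^{k_M-t})$, is fine for $k_M\ge 3t$; the problem is only that Case 1 no longer covers small $d$.)

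The paper sidesteps this entirely by not working with a single minimal component: since no component is defined over $\mathbb F_{q^r}$, each $\mathbb F_{q^r}$-irreducible factor $W_i$ splits (by Lemma 10 of Hernando--McGuire) into $s_i\ge 2$ conjugate absolutely irreducible factors of equal degree $d_i/s_i$, so one can distribute them into two curves $\mathcal A,\mathcal B$ with $|\deg A-\deg B|\le N/3$, whence $\deg A\,\deg B\ge \tfrac29 N^2\approx\tfrac29 q^{2k_M}$. Against the upper bound $\tfrac14(q^t-q-1)^2+(q^t+1)\tfrac14 q^{k_M+t}$ this already forces $k_M\le 2t$ (resp.\ the stated bound when $t\nmid k_M$), and the unbalanced small-$d$ case never arises. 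If you want to keep your route, you would need a genuinely better bound at the $S_1$-type points for low-degree components (e.g.\ exploiting that a degree-$d$ branch of order $1$ cannot meet another branch with multiplicity exceeding $d$ unless the two branches coincide to high order), or simply import the conjugate-factor decomposition as the paper does.
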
 

\begin{proof} Suppose that $\mathcal{C}_f: \tilde{F}(X,Y)=0$ splits into two components $\mathcal{X}$ and $\mathcal{Y}$ sharing no absolutely irreducible component. Clearly their intersection points are singular points of $\mathcal{C}_f$. As previously observed, the origin is an ordinary singular point of $\mathcal{C}_f$ of multiplicity $q^{t}-q-1$ and from Lemmas \ref{lem1}, \ref{lem1_BIS}, \ref{lem1_2} and \ref{lem1_2BIS}, $I(R_\xi,\mathcal{X} \cap \mathcal{Y})=0$. Let $T \in \mathcal{I}:=\{P,S_\xi,Q_\xi\}$. From Lemma \ref{lem2}  $I(T,\mathcal{X} \cap \mathcal{Y}) \leq q^{k_M+t}/4$. Note that $|\mathcal{I}|=1+q+(q^\ell-q)=q^\ell+1$, where $\ell=\gcd(t,k_1,\ldots,k_M)$. 
Hence, if $t \mid k_M$ then
    \begin{equation} \label{int1}
    \sum_{T \in \mathcal{X} \cap \mathcal{Y}} I(T,\mathcal{X} \cap \mathcal{Y}) \leq \frac{(q^t-q-1)^2}{4}+(q^t+1)\frac{q^{k_M+t}}{4};
    \end{equation}
    while if $t \nmid k_M$ then
     \begin{equation} \label{int2}
     \sum_{T \in \mathcal{X} \cap \mathcal{Y}} I(T,\mathcal{X} \cap \mathcal{Y}) \leq \frac{(q^t-q-1)^2}{4}+(q^{t/2}+1)\frac{q^{k_M+t}}{4}.
     \end{equation}
Assume that $\tilde{F}(X,Y)=W_1(X,Y)\ldots W_k(X,Y)$ is the decomposition of $\tilde{F}(X,Y)$ over $\mathbb{F}_{q^r}$ with $\deg(W_i)=d_i$ and $\sum_{i=1}^{k}d_i=q^{k_M}+q^t-q-1=\deg(\tilde{F}(X,Y))$ and suppose by contradiction that $\mathcal{C}_f$ has no absolutely irreducible components defined over $\mathbb{F}_{q^r}$. 
From \cite[Lemma 10]{hernando_proof_2011}, there exist natural numbers $s_i$ such that $W_i$ splits into $s_i$ absolutely irreducible factors over $\bar{\mathbb{F}}_{q^r}$ each of degree $d_i/s_i$. Since $\mathcal{C}_f$ has no absolutely irreducible factors defined over $\mathbb{F}_{q^r}$, $s_i >0$ for $i=1,\ldots,k$.
Consider the polynomials
$$A(X,Y)=\prod_{i=1}^{k} \prod_{j=1}^{\lfloor s_i/2 \rfloor} Z_i^{j}(X,Y), \ B(X,Y)=\prod_{i=1}^{k}\prod_{j=\lfloor s_i/2 \rfloor+1}^{s_i} Z_i^j(X,Y),$$
where $Z_i^1(X,Y),\ldots,Z_i^{s_i}(X,Y)$ are absolutely irreducible components of $W_i(X,Y)$.
Let $\alpha$ and $\alpha+\beta$ be the degrees of $A(X,Y)$ and $B(X,Y)$ respectively. 
Then $2\alpha+\beta=\deg(\mathcal{C}_f)=q^{k_M}+q^t-q-1$, $\beta \leq \alpha$ and $\beta \leq (q^{k_M}+q^t-q-1)/3$. Furthermore from $\alpha=(q^{k_M}+q^t-q-1-\beta)/2$,
$$\deg(A)\deg(B)=\alpha(\alpha+\beta)= \frac{(q^{k_M}+q^t-q-1)^2-\beta^2}{4} \geq \frac{2(q^{k_M}+q^t-q-1)^2}{9}.$$
Let $\mathcal{A}:A(X,Y)=0$ and $\mathcal{B}: B(X,Y)=0$. By B\'ezout’s Theorem \ref{th:bezout},
\begin{equation} \label{bez}
\sum_{T \in \mathcal{A} \cap \mathcal{B}}I(T,\mathcal{A} \cap \mathcal{B})=\deg(A)\deg(B) \geq \frac{2(q^{k_M}+q^t-q-1)^2}{9}.
\end{equation}
Now we can combine \eqref{bez} with \eqref{int1} and \eqref{int2}.
Assume first that $t \mid k_M$ so that $k_M=\gamma t$, $\gamma \geq 1$. Then from \eqref{bez} and \eqref{int1} we get
$$\frac{(q^t-q-1)^2}{4}+(q^t+1)\frac{q^{k:M+t}}{4} \geq \frac{2(q^{\gamma t}+q^t-q-1)^2}{9},$$
which is false whenever $\gamma \geq 3$.
If $t \nmid k_M$ then write $k_M=\gamma t+s$ with $s =1,\ldots,t-1$. From \eqref{bez} and \eqref{int2}
$$\frac{(q^t-q-1)^2}{4}+(q^{t/2}+1)\frac{q^{(\gamma+1)t+s}}{4} \geq \frac{2(q^{\gamma t+s}+q^t-q-1)^2}{9},$$
which is false whenever $k_M \geq 2t-1$.
This shows that $\mathcal{C}_f$ has an absolutely irreducible component defined over $\mathbb{F}_{q^r}$. To show that $f(X)$ is not exceptional scattered it is sufficient to show that if $r$ is sufficiently large then $\mathcal{C}_f$ has an affine point $P=(x,y)$ with $x/y \not\in \mathbb{F}_q$. From the Hasse-Weil bound
$$|\mathcal{C}_f(\mathbb{F}_{q^r})| \geq q^r+1-(q^{k_M}+q^t-q-2)(q^{k_M}+q^t-q-3)\sqrt{q^r}.$$
The number of ideal points of $\mathcal{C}_f$ is at most $q^{k_M-t}$, while the number of affine points $P=(x,y)$ of $\mathcal{C}_f$ with $x/y \in \mathbb{F}_q$ is at most $q(q^{k_M}+q^t-q-1)$. Hence it is sufficient to observe that
$$q^r+1-(q^{k_M}+q^t-q-2)(q^{k_M}+q^t-q-3)\sqrt{q^r}-q^{k_M-t}-q(q^{k_M}+q^t-q-1) >0$$
for $r \geq 5k_M$.
\end{proof}

We note that for $t=2$ the hypothesis $k_i \geq 2$ for $i \geq 2$ is trivially satisfied. Hence the complete classification of exceptional scattered polynomials of index $2$ follows as a corollary of Proposition \ref{fine}.
\begin{corollary}
The only exceptional scattered monic polynomials $f$ of index $2$ over $\mathbb{F}_{q^r}$ are those of type \eqref{eq:the_one}.
\end{corollary}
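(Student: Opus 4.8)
The plan is to read the classification off Proposition \ref{fine} after disposing of the few cases that fall outside its degree range. Using the standing normalizations [C1], [C2], [C3] of Section \ref{Sec:Machinery}, write $f(X)=\sum_{i=0}^{M}A_iX^{q^{k_i}}$ with $0=k_0<k_1<\dots<k_M$, $A_0\neq 0$ (by [C2], since $t=2>0$), $A_M=1$, and $k_i\neq 2$ for all $i$ (by [C1]). Since the $k_i$ are strictly increasing with $k_1\geq 1$, we get $k_2\geq 2$, and as $k_2\neq 2$ in fact $k_2\geq 3>t$; hence $k_i\geq t$ for every $i\geq 2$, which is exactly the hypothesis of Proposition \ref{fine}. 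Applying that proposition, an exceptional scattered $f$ of index $2$ can satisfy neither ``$2\mid k_M$ and $k_M\geq 6$'' nor ``$2\nmid k_M$ and $k_M\geq 3$'', and together with $k_M\neq 2$ this leaves only $k_M\in\{1,4\}$.

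Consider first $k_M=4$. If $f$ has a middle term, i.e.\ $M\geq 2$, then $k_1\in\{1,3\}$ is odd, so $\ell:=\gcd(t,k_1,\dots,k_M)=1$ and the set $\mathcal{I}=\{P,S_\xi,Q_\xi\}$ of ideal singularities of $\widetilde{\mathcal{D}}_f$ other than the $R_\xi$ has cardinality $q^\ell+1=q+1$. Re-running the counting of Proposition \ref{fine} with this sharper value (Lemmas \ref{lem1}, \ref{lem1_BIS}, \ref{lem1_2}, \ref{lem1_2BIS} still give $I(R_\xi,\mathcal{X}\cap\mathcal{Y})=0$, and the origin is treated as before), the total intersection number at the singular points of two putative $\mathbb{F}_{q^r}$-components of $\mathcal{C}_f$ is at most $\frac{(q^t-q-1)^2}{4}+(q+1)\frac{q^{k_M+t}}{4}=O(q^{7})$, whereas B\'ezout's Theorem \ref{th:bezout} (combined with \cite[Lemma 10]{hernando_proof_2011}, exactly as in Proposition \ref{fine}) forces it to be at least $\frac{2}{9}(q^{k_M}+q^{t}-q-1)^2=\Theta(q^{8})$; this contradiction shows $\mathcal{C}_f$ has an absolutely irreducible component over $\mathbb{F}_{q^r}$, and the Hasse--Weil estimate at the end of Proposition \ref{fine} then shows $f$ is not exceptional scattered. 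If instead $M=1$, then $f(X)=X^{q^4}+A_0X$ with $A_0\neq 0$, which is the monic normalization of the index-$2$ member $x+A_0^{-1}x^{q^{4}}$ of the family \eqref{eq:the_one}; thus $f$ is of type \eqref{eq:the_one}, as required.

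Now consider $k_M=1$, i.e.\ $f(X)=X^{q}+A_0X$ with $A_0\neq 0$; here $t=2$ lies below the degree threshold of Proposition \ref{fine}, so a direct argument is needed. A short computation with Lemma \ref{le:link} gives that $\mathcal{C}_f$ is the plane curve of degree $q^2-1$ defined by $(XY)^{q-1}(Y^{q-1}-X^{q-1})^{q-1}+A_0\sum_{j=0}^{q}X^{(q-1)j}Y^{(q-1)(q-j)}=0$, and the substitution $U=X^{q-1}$, $V=Y^{q-1}$ exhibits $\mathcal{C}_f$ as the pull-back along $(X,Y)\mapsto (X^{q-1},Y^{q-1})$ of the curve $\mathcal{E}\colon V^{q+1}(A_0+U)-U^{q+1}(A_0+V)=0$, that is, of the curve $g(U)=g(V)$ for the degree-$(q+1)$ rational function $g(U)=(A_0+U)/U^{q+1}$. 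Since $g'(U)=-A_0U^{-(q+2)}$ has no zeros, the only ramification of $g$ is a totally ramified point over $g(0)=\infty$ and a point of index $q$ over $g(\infty)=0$; the inertia at the latter fixes one sheet and is transitive on the other $q$, so the geometric monodromy group of $g$ is $2$-transitive and $\mathcal{E}$ is absolutely irreducible. Comparing the divisors of $U$ and $V$ on $\mathcal{E}$ shows that $U^aV^b$ is a $(q-1)$-th power in $\overline{\mathbb{F}_q}(\mathcal{E})$ only when $(q-1)\mid a$ and $(q-1)\mid b$, so the Kummer pull-back $\mathcal{C}_f$ is again absolutely irreducible, and it is clearly defined over $\mathbb{F}_{q^r}$. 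A final application of the Hasse--Weil bound (the number of ideal points of $\mathcal{C}_f$ and of affine points with $y/x\in\mathbb{F}_q$ being bounded independently of $m$) yields, for all large $m$, a point $(x,y)\in\mathcal{C}_f(\mathbb{F}_{q^{mr}})$ with $y/x\notin\mathbb{F}_q$; thus $U_m$ is not scattered and $f$ is not exceptional scattered.

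Putting the cases together, the only exceptional scattered monic polynomials of index $2$ are those of the form $X^{q^4}+A_0X$, i.e.\ exactly those of type \eqref{eq:the_one}. The delicate point is the case $k_M=1$: it lies below the degree threshold where the intersection-multiplicity and B\'ezout machinery of Section \ref{Sec:Machinery} can yield a contradiction, so there one must instead prove directly that the small explicit curve $\mathcal{C}_f$ is absolutely irreducible, which is cleanest via the monodromy argument above. A secondary subtlety, in the case $k_M=4$ with a middle term, is that the gain needed in the B\'ezout count comes not from a sharper local estimate but merely from the fact that $\gcd(t,k_1,\dots,k_M)=1$, which makes the set $\mathcal{I}$ of ideal singular points small enough.
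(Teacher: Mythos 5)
Your proof is correct in substance, and it does considerably more than the paper's own argument --- the extra work is in fact necessary. The paper disposes of the corollary in one line: for $t=2$ the hypothesis $k_i\geq t$ for $i\geq 2$ of Proposition \ref{fine} is automatic, ``hence the classification follows.'' But Proposition \ref{fine} only excludes $k_M\geq 3t=6$ when $2\mid k_M$ and $k_M\geq 2t-1=3$ when $2\nmid k_M$, so, exactly as you observe, the cases $k_M\in\{1,4\}$ are left untouched and the paper never returns to them in the proof of this corollary. You fill both gaps. For $k_M=4$ with a middle term, replacing the crude bound $|\mathcal{I}|\leq q^t+1$ by $|\mathcal{I}|=q^{\ell}+1=q+1$ (forced by $\ell=\gcd(2,k_1,\dots,k_M)=1$, since $k_1\in\{1,3\}$) is the right move, and the resulting inequality $\frac{(q^2-q-1)^2}{4}+(q+1)\frac{q^6}{4}<\frac{2}{9}(q^4+q^2-q-1)^2$ does hold for every prime power $q$ (at $q=2$ it reads $48.25<578/9$); you should make that numerical check explicit rather than argue via $O(q^7)$ versus $\Theta(q^8)$, since the constants matter at $q=2$. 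For $k_M=1$ your Kummer/monodromy argument is sound and is essentially the computation the paper carries out in its ``General Binomials'' subsection (nominally for index $0$ but written for arbitrary $t$: with $n=0<m=1<t=2$ the exponents are not in arithmetic progression, so the Kummer extension is proper); you have reconstructed an argument the paper possesses but never invokes here.

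Two small repairs are needed in the $k_M=1$ case. First, $\mathcal{E}\colon V^{q+1}(A_0+U)-U^{q+1}(A_0+V)=0$ contains the diagonal $U=V$ as a component, so it is not absolutely irreducible as written; what the $2$-transitivity of the monodromy of $g$ gives is the absolute irreducibility of the degree-$(q+1)$ quotient $\bigl(V^{q+1}(A_0+U)-U^{q+1}(A_0+V)\bigr)/(U-V)=0$, and it is this quotient, not $\mathcal{E}$, whose pull-back under $(X,Y)\mapsto(X^{q-1},Y^{q-1})$ has degree $(q+1)(q-1)=q^2-1$ and coincides with $\mathcal{C}_f$. Second, the assertion that $U^aV^b$ is a $(q-1)$-th power only when $(q-1)\mid a$ and $(q-1)\mid b$ needs a line of justification: over the branch value $g=0$ the off-diagonal component has exactly one place above $(U,V)=(-A_0,\infty)$, where $V$ has a simple pole while $U$ is a unit, and symmetrically one place above $(\infty,-A_0)$; the valuations of $U^aV^b$ at these two places are $-b$ and $-a$, which forces $(q-1)\mid a$ and $(q-1)\mid b$. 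Finally, you tacitly assume $M\geq 1$; the degenerate $f=X$ is also of type \eqref{eq:the_one} (the case $\delta=0$), so nothing is lost, but it should be mentioned for completeness.
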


\section{Casi sporadici}

\begin{theorem}{\rm{(\cite[Remark 3.2 and Theorem 3.3]{BZ})}} \label{sporadic}
Let $q$ be a prime power. Let Then $X^{q^k}$ is the unique exceptional scattered monic polynomial of index $0$ unless one of the following cases occurs.
\begin{enumerate}
\item $q=2$ and $f(X)=X^{2^{k-1}}+bX^{2^{k}}$ with $b \in \mathbb{F}_{2^n}^*$; 
\item $q=2$ and $f(X)=X^{2^{k-2}}+aX^{2^{k-1}}+bX^{2^{k}}$ with $k \geq 7$ and $a,b \in \mathbb{F}_{2^n}^*$; 
\item $q=3$ and $f(X)=X^{3^{k-1}}+bX^{3^{k}}$ with  $b \in \mathbb{F}_{3^n}^*$;
\item $q=4$ and $f(X)=X^{4}+bX^{16}$ or $f(X)=X^{16}+bX^{64}$ with $b \in \mathbb{F}_{4^n}^*$;
\item $q=5$ and $f(X)=X^{5}+bX^{25}$ with $b \in \mathbb{F}_{5^n}^*$.
\end{enumerate}
\end{theorem}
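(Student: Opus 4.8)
The plan is to prove Theorem~\ref{sporadic} (following \cite{BZ}) by running the four-step programme of Section~\ref{Sec:Machinery} with $t=0$. After normalising by [C1] and [C3] we may assume $f(X)=X^{q^k}+\sum_{1\le i<k}a_iX^{q^i}$, and by Lemma~\ref{le:link} with $t=0$ the requirement that $f$ be exceptional scattered means that the plane curve $\mathcal{C}_f$ cut out by $\frac{f(X)Y-f(Y)X}{X^qY-XY^q}=0$, which has degree $q^k-q$, has \emph{no} absolutely irreducible component defined over $\mathbb{F}_q$: for if it had one, the Hasse--Weil bound would force, over every sufficiently large extension $\mathbb{F}_{q^m}$, an affine point $(x,y)$ of $\mathcal{C}_f$ with $y/x\notin\mathbb{F}_q$, against \eqref{eq:scattered_poly}. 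Conversely, for $f=X^{q^k}$ the curve is the union of the $q^k-q$ lines $X=\mu Y$ with $\mu\in\mathbb{F}_{q^k}\setminus\mathbb{F}_q$, which has no absolutely irreducible $\mathbb{F}_q$-component. So the whole content is: for which non-monomial $f$ can $\mathcal{C}_f$ avoid an absolutely irreducible $\mathbb{F}_q$-component?

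Assume by contradiction that $\mathcal{C}_f$ splits as $\mathcal{A}\cup\mathcal{B}$ with no common component and no absolutely irreducible $\mathbb{F}_q$-factor. The next step is the local study of the singular points: as in the index-$t$ case these are the origin --- an ordinary singular point whose multiplicity is governed by the smallest $q$-degree occurring in $f$, and which is absent when the coefficient of $X^q$ is nonzero --- together with the points at infinity $(1:0:0)$, $(0:1:0)$ and $(a:1:0)$. At each of these one bounds the number of branches, hence $I(T,\mathcal{A}\cap\mathcal{B})$, by iterating the local quadratic transformations $\theta,\eta$ and the translations $\rho$, exactly in the spirit of Propositions~\ref{branch}, \ref{branch2} and Lemmas~\ref{lem1}--\ref{lem2}. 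Summing these yields an explicit upper bound for $\sum_T I(T,\mathcal{A}\cap\mathcal{B})$ that is small compared with $(q^k-q)^2$, while a balanced decomposition argument using \cite[Lemma 10]{hernando_proof_2011} gives $\deg\mathcal{A}\cdot\deg\mathcal{B}\ge\frac{2}{9}(q^k-q)^2$; B\'ezout's Theorem~\ref{th:bezout} then pits the two estimates against each other and produces an explicit numerical inequality in $q$ and $k$.

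The last step is purely arithmetic: that inequality fails --- so $\mathcal{C}_f$ does contain an absolutely irreducible $\mathbb{F}_q$-component, and $f$ is not exceptional scattered --- for all $q\ge 2$ and all $k$ except on a short list of small pairs $(q,k)$; after first shifting away any low-$q$-degree gap in $f$ (so that one is left with $f$ of small $q$-degree with few terms), reading off this list produces exactly the five families in the statement. For the surviving candidates the uniform estimate is too coarse, and one finishes either by replacing the crude bound $\frac14 m^2$ in the relevant branch count $m$ at the worst point at infinity by the exact value read from the branch expansions --- this is precisely what forces $k\ge 7$ in case~(2) for $q=2$ --- or, when $k$ is genuinely small, by a direct resultant or Gr\"obner-basis computation deciding whether $\frac{f(X)Y-f(Y)X}{X^qY-XY^q}$ has an $\mathbb{F}_q$-rational factor. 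I expect the main obstacle to be exactly this precision at the infinite singular points: since a loss of even a constant factor in the branch count there enlarges the exceptional list, the quadratic-transformation bookkeeping must be carried out carefully, and the small-$q$ borderline cases genuinely resist the uniform Hasse--Weil/B\'ezout comparison and need case-tailored sharpening or explicit computation.
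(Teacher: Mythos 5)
First, a point of orientation: the paper does not prove this statement at all — it is quoted from \cite{BZ} (Remark 3.2 and Theorem 3.3 there) and used as a black box, so there is no internal proof to measure your attempt against. That said, your sketch correctly identifies the strategy that \cite{BZ} itself follows: reduce exceptionality to the non-existence of an absolutely irreducible $\mathbb{F}_q$-component of $\mathcal{C}_f$, bound $\sum_T I(T,\mathcal{A}\cap\mathcal{B})$ at the singular points by branch analysis via quadratic transformations, play this against the lower bound $\tfrac{2}{9}(\deg \mathcal{C}_f)^2$ from a balanced splitting and B\'ezout, and finish with Hasse--Weil. Your degree count $q^k-q$ and the factorization of $\mathcal{C}_f$ into the lines $X=\mu Y$, $\mu\in\mathbb{F}_{q^k}\setminus\mathbb{F}_q$, in the monomial case are both right.

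As a proof of the stated theorem, however, there is a genuine gap: the entire content of the statement \emph{is} the explicit list of five surviving families — the precise binomial and trinomial shapes, the restriction to $q\le 5$, the two specific binomials at $q=4$, and the threshold $k\ge 7$ in case (2) — and none of this is derived. You do not carry out the $t=0$ local analysis (which is not the one done in this paper's Lemmas \ref{lem1}--\ref{lem2}; those assume $t>0$, and for $t=0$ the tangent cone $Y^{q^t}$ degenerates to $Y$ and the singular-point structure changes), you do not write down the resulting numerical inequality, and you do not solve it to extract the exceptional pairs $(q,k)$ and coefficients; the sentence ``reading off this list produces exactly the five families'' is precisely the step that constitutes the theorem. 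Similarly, that a sharpened branch count ``forces $k\ge 7$'' is asserted rather than shown. What you have is an accurate description of the programme of \cite{BZ}, not a proof of its outcome; to close the gap you would either have to reproduce those computations in full or, as the paper does, simply cite \cite{BZ} for this statement and reserve your own work for eliminating the five listed cases (which is what Sections 4--5 of the paper then do).
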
 

We are going to proceed with a cases-by-case analysis of 1-5 applying the following general strategy:

\begin{theorem}
Let $f(X)=X^{q^i}+bX^{q^{i+1}}$, $b\in \mathbb{F}_{q^n}$, a polynomial in $\mathbb{F}_{q^n}[X]$. Then $f(X)$ is not exceptional scattered.
\end{theorem}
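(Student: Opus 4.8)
The plan is to reduce the statement to an application of Proposition \ref{fine} after suitable normalizations. Let $f(X)=X^{q^i}+bX^{q^{i+1}}$ with $b\in\mathbb{F}_{q^n}^*$ (if $b=0$ then $f(X)=X^{q^i}$ defines the Blokhuis--Lavrauw linear set and is not of the form we are trying to exclude, so we may assume $b\neq 0$). Using normalization [C2], since the smallest index with a nonzero coefficient is $t_0=i$, the associated linear set problem of index $t$ is equivalent to considering $\{(x^{q^{t-i}}, x + b^{q^{n-i}} x^{q}): x\in\mathbb{F}_{q^{mn}}\}$; that is, after rescaling we may assume $i=0$ and study $g(X)=X + c X^{q}$ for some $c\neq 0$, of index $t' := t-i$. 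Note $g$ has $q$-degree $k_M=1$. The key observation is that, regardless of $t'$, the polynomial $g(X)=X+cX^q$ is \emph{not} of the shape covered by the unique known family \eqref{eq:the_one} — that family with exponents $\{0, 2s\}$ has top $q$-degree $2s\ge 2$, whereas here the top $q$-degree is $1$. So we expect $g$ to fail to be exceptional scattered.

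First I would dispose of the case $t'=0$ by invoking Theorem \ref{t01}(1) together with Theorem \ref{sporadic}: for $q>5$ the only exceptional scattered monic polynomial of index $0$ is $X^{q^k}$, and for $q\le 5$ the sporadic list in Theorem \ref{sporadic} must be checked case by case against the present section's cases-by-case analysis (indeed, $X^{q^i}+bX^{q^{i+1}}$ reduces precisely to the two-term sporadic families, which the ``general strategy'' of this section is designed to eliminate). The case $t'=1$ follows directly from Theorem \ref{t01}(2): the only exceptional scattered monic polynomials of index $1$ are $X$ and $bX+X^{q^2}$, and neither has the form $X+cX^q$ with $c\ne 0$. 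The main work is the case $t'\ge 2$. Here I want to apply Proposition \ref{fine} with $t:=t'$. But Proposition \ref{fine} requires $k_M\ge 3t$ (if $t\mid k_M$) or $k_M\ge 2t-1$ (if $t\nmid k_M$), while our curve has $k_M=1<t$. So Proposition \ref{fine} does not apply directly; the hypothesis is violated in the most extreme way.

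Instead, for $t'\ge 2$ I would argue directly using the branch analysis of Section 3, specialized to $k_M=1$, $M=1$, $k_1=0$. In this degenerate regime the homogenized polynomial is $f(X,T)=X\,T^{q^{t'}... }$ — wait, with only the two terms $X^{q^0}$ and $cX^{q^1}$ after un-normalizing, the curve $\mathcal{D}_f: f(X)Y^{q^{t'}}-f(Y)X^{q^{t'}}=0$ has very low degree relative to its singularities, and one should instead apply a Hasse--Weil count directly to $\mathcal{C}_f$, or better, reduce to the already-classified indices. The cleanest route: observe that since $t'\ge 2$, Theorem \ref{main} (the main theorem of the paper, which Proposition \ref{fine} and the $R_\xi$-lemmas combine to prove) states that the only exceptional scattered polynomials of index $t'\ge 2$ are those with at least two non-trivial terms of $q$-degree strictly less than $t'$ besides the constant-degree term. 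Our $g(X)=X+cX^q$ has its only non-trivial term ($cX^q$, assuming $t'\ge 2$ so that $q<q^{t'}$) of $q$-degree $1$, and that is a single term, not two. Hence $g$ does not meet the criterion of Theorem \ref{main}, so $g$ — and therefore the original $f(X)=X^{q^i}+bX^{q^{i+1}}$ — is not exceptional scattered.

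The step I expect to be the main obstacle is the low-$q$ sporadic bookkeeping for $t'=0$ (cases $q=2,3,4$, and $q=5$ in Theorem \ref{sporadic}): one must verify that each two-term family listed there genuinely fails to remain maximum scattered for infinitely many field extensions, which is exactly what the ``general strategy'' promised at the end of this section must carry out — typically by exhibiting, for $r$ large, an absolutely irreducible $\mathbb{F}_{q^r}$-rational component of $\mathcal{C}_f$ via a direct factorization or a Hasse--Weil estimate on $|\mathcal{C}_f(\mathbb{F}_{q^r})|$ after removing the bounded number of ideal points and the points with $y/x\in\mathbb{F}_q$. For $t'\ge 2$ and $t'=1$ everything is immediate from the cited theorems; the genuine content of this particular statement is the $q\le 5$, index-$0$ analysis, which proceeds by the quadratic-transformation branch count of Section 3 exactly as in the proof of Proposition \ref{fine}, but with the small-degree arithmetic worked out explicitly.
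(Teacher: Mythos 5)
Your proposal does not actually prove the statement. In the paper this theorem lives in the section devoted to the remaining index-$0$ cases for $q\le 5$: it \emph{is} the ``general strategy'' announced there for eliminating the binomial families listed in Theorem \ref{sporadic}. Your treatment of that case is therefore circular: you dispose of $t'=0$ by appealing to ``the present section's cases-by-case analysis\ldots which the general strategy of this section is designed to eliminate'' --- that is, to the very statement under proof --- and you yourself concede that ``the genuine content of this particular statement is the $q\le 5$, index-$0$ analysis'' without ever carrying it out. Describing what such an argument would ``typically'' involve (an absolutely irreducible $\mathbb{F}_{q^r}$-rational component plus Hasse--Weil) identifies the right shape of the proof but supplies none of it: you never produce the component, never control its field of definition, and never bound its genus. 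Your reductions for $t'=1$ and $t'\ge 2$ are correct as far as they go but beside the point; note also that the [C2] normalization to index $t-i$ is not available when $t=0<i$, and that you invoke Theorem \ref{main} for $t'\ge 2$ immediately after observing that Proposition \ref{fine}, on which Theorem \ref{main} rests, has its hypothesis $k_M\ge 2t-1$ ``violated in the most extreme way.''

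What the paper actually does is concrete and short. After the substitution $(X,Y)\mapsto(X,XY)$, the index-$0$ curve attached to $f(X)=X^{q^i}+bX^{q^{i+1}}$ is birationally reduced to the Kummer curve
\[
X^{q-1}=-\frac{Y^{q^i-1}-1}{b\,Y^{q^{i+1}-1}-1}=g(Y),
\]
whose right-hand side has only simple zeros and poles. By \cite[Proposition 3.7.3]{Sti} the corresponding function field is a Kummer extension of the rational function field with full constant field $\mathbb{F}_{q^n}$ (hence the curve is absolutely irreducible over $\mathbb{F}_{q^n}$) and genus at most $q^{i+2}/2$, a bound independent of $n$. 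Hasse--Weil then yields, for $n$ large, an affine $\mathbb{F}_{q^n}$-rational point $(x_0,y_0)$ with $y_0\notin\mathbb{F}_q$, which pulls back to a point of $\mathcal{C}_f$ violating the scattered condition of Lemma \ref{le:link}, so $f$ is not exceptional scattered. This explicit Kummer-extension computation is precisely the step your proposal needed to perform and did not.
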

\proof
Consider the curve $\mathcal{Y}_f$ associated with $f(X)$ defined by 
$$X^{q-1}=
-\frac{Y^{q^i-1}-1}{bY^{q^{i+1}-1}-1}=
-\frac{\prod_{\alpha \in \mathbb{F}_{q^i}^*}(Y-\alpha)}
{b\prod_{\beta \in \mathbb{F}_{q^{i+1}}^*}(Y-\beta)}=g(Y).$$

Let $F$ be the function field associated with $\mathcal{Y}_f$. Since it can be seen as a Kummer extension of the rational function field and all the zeros of $g(Y)$ are simple, by \cite[Proposition 3.7.3]{Sti} $\mathbb{F}_{q^n}$ is the constant field of $F$. If $n$ is large enough, since its genus is at most
$$1-(q-1)+\frac{(q^{i+1}+q^i-2q-2)(q-2)}{2}\leq \frac{q^{i+2}}{2}$$
(see again \cite[Proposition 3.7.3]{Sti}), the curve $\mathcal{Y}_f$  contains points of type $(x_0,y_0)\in \mathbb{F}_{q^n}^2$, where $y_0\notin \mathbb{F}_q$.

Consider now 
$$F(X,Y)=\frac{
(X^{q^i}+bX^{q^{i+1}})Y-(Y^{q^i}+bY^{q^{i+1}})X} {XY(X^{q-1}-Y^{q-1})}=\frac{
X^{q^i-1}+bX^{q^{i+1}-1}-Y^{q^i-1}-bY^{q^{i+1}-1}} {X^{q-1}-Y^{q-1}}.$$
The curve $\mathcal{X}_f$ has equation $F(X,Y)=0$. 
It means that the pair $(x_0^{q^{n-i}},y_0 x_0^{q^n-i})$ is such that \textcolor{red}{$F(x_0^{q^{n-i}},x_0^{q^n-i}y_0)=0$}\footnote{\`E vero? mi sto intrecciando con i conti, ho fatto solo le trasformazioni al contrario\ldots} and $x_0^{q^{n-i}}/x_0^{q^n-i}y_0=1/y_0 \notin \mathbb{F}_q$. So $f(X)$ is not exceptional scattered.
\endproof

\begin{theorem}
Let $q=2$. and consider $f(X)=X^{q^i}+aX^{q^{i+1}}+bX^{q^{i+2}}$, $a,b\in \mathbb{F}_{q^n}$, a polynomial in $\mathbb{F}_{q^n}[X]$. Then $f(X)$ is not exceptional scattered.
\end{theorem}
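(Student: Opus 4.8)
The plan is to follow the scheme of the binomial case treated just above: convert the scattering condition into a statement about affine rational points of an explicit plane curve, show that curve is absolutely irreducible over the field of coefficients, and conclude with the Hasse--Weil bound. We may assume $i\geq 1$ (otherwise [C1] fails) and $ab\neq 0$ (if $b=0$ we are in the binomial case just settled, and if $a=0$ the argument below goes through with minor modifications, or even more directly). Recall $q=2$, and write $y=ux$ and $w=x^{2^i}$; the latter is a bijective substitution in the first coordinate since $\gcd(2^i,2^{N}-1)=1$ for every $N$. Dividing $f(x)/x=f(y)/y$ by $x^{2^i-1}$ and using $2^{i+1}-2^i=2^i$ and $2^{i+2}-2^i=3\cdot2^i$, one finds that the solutions of $f(x)/x=f(y)/y$ in $\mathbb{F}_{2^{N}}^{*}$ with $y/x\notin\mathbb{F}_2$ correspond bijectively to the affine points $(u_0,w_0)\in\mathbb{F}_{2^{N}}^{2}$ with $w_0\neq0$, $u_0\notin\mathbb{F}_2$ of the curve
$$\mathcal{Z}:\qquad b\,w^{3}\bigl(u^{2^{i+2}-1}+1\bigr)+a\,w\bigl(u^{2^{i+1}-1}+1\bigr)+\bigl(u^{2^i-1}+1\bigr)=0 .$$

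Since $u^{2^{i+j}-1}+1=(u+1)\,\widetilde P_j(u)$ with $\widetilde P_j(u):=1+u+\cdots+u^{2^{i+j}-2}$, and each $\widetilde P_j$ is squarefree (its roots being $\mathbb{F}_{2^{i+j}}^{*}\setminus\{1\}$), we obtain $\mathcal{Z}=\{u=1\}\cup\mathcal{G}$ where
$$\mathcal{G}:\qquad G(u,w):=b\,w^{3}\widetilde P_2(u)+a\,w\,\widetilde P_1(u)+\widetilde P_0(u)=0 ,$$
and the line $u=1$ is irrelevant because there $u_0=1\in\mathbb{F}_2$. Hence it suffices to produce, for $N=mr$ with $m$ large, an affine $\mathbb{F}_{2^{N}}$-point of $\mathcal{G}$ with $w_0\neq0$ and $u_0\notin\mathbb{F}_2$.

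The crucial point is that $\mathcal{G}$ is absolutely irreducible. As $\mathbb{F}_{2^i}\cap\mathbb{F}_{2^{i+1}}=\mathbb{F}_2$, the polynomials $\widetilde P_0,\widetilde P_1$ are coprime, so $G$ is primitive as a polynomial in $w$ over $\overline{\mathbb{F}}_2[u]$; by Gauss's lemma it is therefore enough to show that the cubic $G$ has no root $w=R(u)$ in $\overline{\mathbb{F}}_2(u)$. If $R$ had a pole of order $k\geq1$ at some $\gamma\in\overline{\mathbb{F}}_2$, then $b\,R^{3}\widetilde P_2$ would have a pole there of order at least $3k-1>k$ (since $\widetilde P_2$ has only simple zeros), which cannot be cancelled by the other two summands; thus $R\in\overline{\mathbb{F}}_2[u]$. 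But $R\neq0$ (otherwise $\widetilde P_0=0$), and for every $\deg R\geq0$ the term $b\,R^{3}\widetilde P_2$ has strictly larger degree than $a\,R\widetilde P_1$ and than $\widetilde P_0$, whence $b\,R^{3}\widetilde P_2+a\,R\widetilde P_1+\widetilde P_0\neq0$ --- a contradiction. So $\mathcal{G}$ is absolutely irreducible, and it is obviously defined over $\mathbb{F}_{2^{r}}$.

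To finish, the genus of the smooth model of $\mathcal{G}$ is at most $\binom{\deg\mathcal{G}-1}{2}$ with $\deg\mathcal{G}=2^{i+2}+1$, a bound depending only on $i$; hence by Hasse--Weil $\#\mathcal{G}(\mathbb{F}_{2^{mr}})\geq 2^{mr}+1-c\cdot 2^{mr/2}$ for a constant $c=c(i)$. The $\mathbb{F}_{2^{mr}}$-points of $\mathcal{G}$ with $u_0\in\mathbb{F}_2$, with $w_0=0$, or at infinity are at most $O(2^{i})$ in number, again a constant, so for $m$ large there is a point with $w_0\neq0$ and $u_0\notin\mathbb{F}_2$. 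Reversing the substitution ($x_0:=w_0^{2^{N-i}}$, $y_0:=u_0x_0$ with $N=mr$) gives $x_0,y_0\in\mathbb{F}_{2^{N}}^{*}$ satisfying $f(x_0)/x_0=f(y_0)/y_0$ with $y_0/x_0\notin\mathbb{F}_2$, so $f$ is not scattered over $\mathbb{F}_{2^{N}}$; since this holds for all large $m$, $f$ is not exceptional scattered. The one delicate step is the absolute irreducibility of $\mathcal{G}$, which works out cleanly only because $\mathcal{G}$ has degree $3$ in $w$ and the polynomials $\widetilde P_j$ are squarefree with $\gcd(\widetilde P_0,\widetilde P_1)=1$.
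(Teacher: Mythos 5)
Your proof is correct and follows essentially the same route as the paper: the substitution $y=ux$, $w=x^{2^i}$ produces exactly the paper's cubic-in-$Z$ curve (your $\widetilde P_j$'s are the paper's rational coefficients with the common factor $u+1$ cleared), and your absolute-irreducibility argument — ruling out a root in $\overline{\mathbb{F}}_2(u)$ by pole analysis at zeros of the denominator plus a leading-degree comparison — is the same as the paper's. The only difference is cosmetic: you spell out the concluding Hasse--Weil count and the reversal of the substitution, which the paper leaves implicit.
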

\proof
Consider the equation
$$F(X,Y)=0 \iff \frac{f(X)Y-f(Y)X}{X^qY-XY^q}=\frac{X^{q^i-1}-Y^{q^i-1}+a(X^{q^{i+1}-1}-Y^{q^{i+1}-1})+b(X^{q^{i+2}-1}-Y^{q^{i+2}-1})}{X^{q-1}y^{q-1}}=0.$$
With the substitution $(X,Y) \to (X,XY)$, and considering $XY\neq 0$, $Y^{q-1}\neq 1$, the above equation reads
$$X^{q^i-q}\frac{Y^{q^i-1}-1}{Y^{q-1}-1}+aX^{q^{i+1}-q}\frac{Y^{q^{i+1}-1}-1}{Y^{q-1}-1}+bX^{q^{i+2}-q}\frac{Y^{q^{i+2}-1}-1}{Y^{q-1}-1}=0
$$
that is 
\begin{equation}\label{EqCubica}\frac{Y^{q^{i}-1}-1}{Y^{q^{i+2}-1}-1}+aX^{q^{i+1}-q^i}\frac{Y^{q^{i+1}-1}-1}{Y^{q^{i+2}-1}-1}+bX^{q^{i+2}-q^i}=0 \iff \frac{Y^{q^{i}-1}-1}{Y^{q^{i+2}-1}-1}+aZ\frac{Y^{q^{i+1}-1}-1}{Y^{q^{i+2}-1}-1}+bZ^{q+1}=0,
\end{equation}
where $Z=X^{q^i(q-1)}$.

Recalling that $q=2$, in Equation \eqref{EqCubica}, the polynomial 
$$G(Y,Z)=\frac{Y^{q^{i}-1}-1}{Y^{q^{i+2}-1}-1}+aZ\frac{Y^{q^{i+1}-1}-1}{Y^{q^{i+2}-1}-1}+bZ^{q+1}\in \mathbb{F}_{q^n}(Y)[Z]$$
is absolutely irreducible if and only it does not contain a factor of type $Z-h(Y)/g(Y)$, with both $h,g\in \overline{\mathbb{F}_q}[Y]$ and coprime. If it happens, then
\begin{equation}\label{EqRidotta}
    bh^3(Y)(Y^{2^{i+2}-1}-1)+ah(Y)g^2(Y)(Y^{2^{i+1}-1}-1)+g^3(Y)(Y^{2^{i}-1}-1)
\end{equation}
vanishes. Consider now a root $\alpha$ of $g(Y)$. Then $(Y-\alpha)^2$ should divide $h^3(Y)(Y^{2^{i+2}-1}-1)$ and therefore $Y^{2^{i+2}-1}-1$, which is impossible since this polynomial is separable. So $g(Y)$ is constant. If $d=\deg(f)$ it is readily seen that the highest degree monomial in \eqref{EqRidotta} is $Y^{3d+2^{i+2}-1}$ and it does not vanish. 
This shows that $G(Y,Z)$ is absolutely irreducible and defines an absolutely irreducible curve defined over $\mathbb{F}_{q^n}$ curve $\mathcal{Y}_f$. This yields the existence of pairs $(x_0,y_0) \in \mathbb{F}_{q^n}^2$ with $x_0/y_0\notin \mathbb{F}_q$ and therefore $f(X)$ is not exceptional scattered.

\endproof

\section{The open cases for $t=0$}

In this subsection we prove that Theorem \ref{t01} (1) holds also for $q \leq 5$, that is for the open cases left in \cite{BZ}. Since the open cases regards binomials and trinomials  in the following  we analyze two families of binomials and trinomials in a more general setting. 

\subsection{General Binomials}

Consider a curve of type $\mathcal{X}_f : F(X,Y)=0$, where 
$$F(X,Y)=\frac{(X^{q^n}+bX^{q^m})Y^{q^t}-(Y^{q^n}+bY^{q^m})X^{q^t}}{X^qY-XY^q}\in \mathbb{F}_{q^k}(X,Y),$$
where $n<m$ and $b\in \mathbb{F}_{q^k}$.
Now,
\begin{eqnarray*}
F(X,XY)&=&\frac{(X^{q^n}+bX^{q^m})X^{q^t}Y^{q^t}-(X^{q^n}Y^{q^n}+bX^{q^m}Y^{q^m})X^{q^t}}{X^{q+1}(Y-Y^q)}\\
&=&X^{q^n+q^t-q-1}\frac{(1+bX^{q^m-q^n})Y^{q^t}-(Y^{q^n}+bX^{q^m-q^n}Y^{q^m})}{(Y-Y^q)}.
\end{eqnarray*}

We have that 
$G(X,Y)=F(X,XY)=0$ if and only if (apart from $X=0$)
$$bX^{q^m-q^n}\frac{Y^{q^t}-Y^{q^m}}{Y^q-Y}+\frac{Y^{q^t}-Y^{q^n}}{Y^q-Y},$$
that is 
$$bX^{q^m-q^n}=\frac{Y^{q^n}-Y^{q^t}}{Y^{q^t}-Y^{q^m}}.$$
Consider $U=X^{q^n}$, $V=Y^{q^{\min(t,n)}}$, therefore  
$$bU^{q^{m-n}-1}=\frac{V^{q^{n-\min(t,n)}}-V^{q^{t-\min(t,n)}}}{V^{q^{t-\min(t,n)}}-V^{q^{m-\min(t,n)}}}.$$

\begin{itemize}
    \item Suppose $t< n<m$. Then 
    $$bU^{q^{m-n}-1}=\frac{V^{q^{n-t}}-V}{V-V^{q^{m-t}}}.$$
    \item Suppose $n<t\leq m$. Then 
    $$bU^{q^{m-n}-1}=\frac{V-V^{q^{t-n}}}{V^{q^{t-n}}-V^{q^{m-n}}}=\frac{V-V^{q^{t-n}}}{(V-V^{q^{m-t}})^{q^{t-n}}}.$$
    \item Suppose $n<m<t$. Then 
     $$bU^{q^{m-n}-1}=\frac{V-V^{q^{t-n}}}{V^{q^{t-n}}-V^{q^{m-n}}}=\frac{V-V^{q^{t-n}}}{(V^{q^{t-m}}-V)^{q^{m-n}}}.$$
\end{itemize}

The above equations always define a Kummer extension of the rational function field with constant field $\mathbb{F}_{q^k}$ apart from the case $m-t=t-n$, that is the three integers are in arithmetical progression. Therefore, in all these cases, there are always pairs $(x_0,y_0)\in \mathbb{F}_{q^k}^2$ such that $F(x_0,y_0)=0$ and $y_0/x_0\notin \mathbb{F}_q$ and then $X^{q^n}+bX^{q^m}$ is not exceptional scattered.

\subsection{Particular trinomial in characteristic $2$}
Now we consider the following trinomial
$$f_k(X)=X^{2^{k-2}}+aX^{2^{k-1}}+bX^{2^k},$$
where $a,b\in \mathbb{F}_{2^n}^*$.
\begin{proposition}
The polynomial $f_k$, $k>2$, $a,b\in \mathbb{F}_{2^n}^*$, is not exceptional scattered of index $t=0$.
\end{proposition}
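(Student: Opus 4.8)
The plan is to run the same machinery used just above for the general binomials, paralleling the index‑$0$ trinomial treatment of the previous section: attach to $f_k$ the plane curve of Lemma~\ref{le:link}, reduce it by an explicit substitution to a curve of degree bounded independently of $n$, prove that curve absolutely irreducible over $\mathbb{F}_{2^n}$, and then invoke the Hasse--Weil bound to produce, for all sufficiently large $n$, an $\mathbb{F}_{2^n}$‑rational affine point $(x_0,y_0)$ with $y_0/x_0\notin\mathbb{F}_2$; by Lemma~\ref{le:link} this makes $U_m$ non‑scattered for all but finitely many $m$, so $f_k$ is not exceptional scattered of index $0$.

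For $q=2$ and $t=0$, Lemma~\ref{le:link} gives $\mathcal{X}_{f_k}\colon F(X,Y)=0$ with
\[
F(X,Y)=\frac{f_k(X)Y-f_k(Y)X}{X^{2}Y-XY^{2}}=\frac{\bigl(X^{2^{k-2}-1}-Y^{2^{k-2}-1}\bigr)+a\bigl(X^{2^{k-1}-1}-Y^{2^{k-1}-1}\bigr)+b\bigl(X^{2^{k}-1}-Y^{2^{k}-1}\bigr)}{X-Y}.
\]
Substituting $(X,Y)\mapsto(X,XY)$, clearing the powers of $X$ and the linear factor $1-Y$, and setting $Z=X^{2^{k-2}}$, one is left with
\[
G(Y,Z)=bZ^{3}P_k(Y)+aZ\,P_{k-1}(Y)+P_{k-2}(Y)=0,\qquad P_j(Y):=Y^{2^{j}-1}-1,
\]
a cubic in $Z$ over $\overline{\mathbb{F}}_2(Y)$. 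Dividing the three coefficients by their common factor $Y-1$ gives the primitive part $\widetilde G=bZ^{3}\widetilde P_k+aZ\,\widetilde P_{k-1}+\widetilde P_{k-2}$, with $\widetilde P_j=P_j/(Y-1)$, of degree bounded independently of $n$; by Gauss's lemma it suffices to prove $\widetilde G$ irreducible in $\overline{\mathbb{F}}_2(Y)[Z]$.

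Being a cubic in $Z$, $\widetilde G$ is reducible only if it has a root $Z=h(Y)/g(Y)$ with $h,g\in\overline{\mathbb{F}}_2[Y]$ coprime; clearing denominators gives $b\,h^{3}\widetilde P_k+a\,h\,g^{2}\widetilde P_{k-1}+g^{3}\widetilde P_{k-2}=0$. If $\alpha$ is a root of $g$ of multiplicity $e\ge 1$, then $(Y-\alpha)^{2e}$ divides the last two summands, hence divides $b\,h^{3}\widetilde P_k$; since $\gcd(h,g)=1$ and $b\neq 0$ this forces $(Y-\alpha)^{2e}\mid\widetilde P_k$, impossible because $P_k=Y^{2^{k}-1}-1$, and therefore also its factor $\widetilde P_k$, is separable over $\mathbb{F}_2$ (as $\gcd(2^{k}-1,2)=1$). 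Hence $g$ is a nonzero constant, and then $b\,h^{3}\widetilde P_k$ has degree $3\deg h+(2^{k}-2)$, strictly larger than the degrees $\deg h+(2^{k-1}-2)$ and $2^{k-2}-2$ of the other two summands, so (using $b\neq 0$) its leading term cannot be cancelled — a contradiction. Thus $\widetilde G$ is absolutely irreducible.

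Consequently $\widetilde G=0$ is an absolutely irreducible curve defined over $\mathbb{F}_{2^n}$ of bounded degree, so by the Hasse--Weil bound it has, for all sufficiently large $n$, an affine $\mathbb{F}_{2^n}$‑rational point $(y_0,z_0)$ outside the finitely many points with $y_0\in\mathbb{F}_2$ or $z_0=0$. Letting $x_0$ be the unique $2^{k-2}$‑th root of $z_0$ in $\mathbb{F}_{2^n}$, the point $(x_0,x_0y_0)$ lies on $\mathcal{X}_{f_k}$ and satisfies $(x_0y_0)/x_0=y_0\notin\mathbb{F}_2$, the required point. The only delicate step is the absolute‑irreducibility argument — specifically, ruling out a rational root with nonconstant denominator, which is exactly where the separability of $Y^{2^{k}-1}-1$ over $\mathbb{F}_2$ enters; the rest is routine bookkeeping of the exponents produced by the substitution and of the finitely many loci removed along the way.
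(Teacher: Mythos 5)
Your proof is correct, and it follows the paper's overall template---reduce via $(X,Y)\mapsto(X,XY)$ and $Z=X^{2^{k-2}}$ to a cubic in $Z$ over $\overline{\mathbb{F}}_2(Y)$, prove that cubic absolutely irreducible, and finish with Hasse--Weil---but the crucial irreducibility step is handled by a genuinely different argument. The paper studies a putative root $\overline{U}\in\overline{\mathbb{F}}_2(Y)$ place by place: it splits into the cases $k$ even and $k$ odd, uses the primitive cube root of unity in $\mathbb{F}_4^*$ to control the valuation of $\overline{U}$ at the roots of $Y^{2^k-1}+1$, concludes that the only possible pole of $\overline{U}$ is at infinity, and then derives a degree contradiction. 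You instead clear denominators to $b\,h^{3}\widetilde{P}_k+a\,h\,g^{2}\widetilde{P}_{k-1}+g^{3}\widetilde{P}_{k-2}=0$, eliminate a nonconstant denominator $g$ in one stroke from the separability of $Y^{2^{k}-1}-1$, and conclude with the same leading-term degree count; no parity case split is needed. This is in fact exactly the argument the paper itself uses for the companion result on $X^{q^i}+aX^{q^{i+1}}+bX^{q^{i+2}}$ in the section on sporadic cases, so your route is entirely consonant with the paper even though it is not the proof attached to this particular proposition. Your version is shorter and uniform in $k$, and it sidesteps a delicate point in the paper's $k$-even case, where the asserted contradiction at the place corresponding to $\eta\in\mathbb{F}_4^*$ only rules out non-integral valuations of $\overline{U}$ (a root with valuation $1$ there is not excluded by that local data alone, so the pole analysis must still be carried to infinity); the paper's valuation-theoretic approach, on the other hand, is the one that scales to situations where the coefficients do not factor so transparently.
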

\proof
Consider the curve $\mathcal{C}_k$ associated with $f_k$.
$$\mathcal{C}_k : \frac{(X^{2^{k-2}}+aX^{2^{k-1}}+bX^{2^k})Y+(Y^{2^{k-2}}+aY^{2^{k-1}}+bY^{2^k})X}{XY(X+Y)}=0.$$
Let us consider the isomorphism $(X,Y)\mapsto (X,XY)$. The equation of the new curve is 
$$\frac{(X^{2^{k-2}}+aX^{2^{k-1}}+bX^{2^k})XY+(X^{2^{k-2}}Y^{2^{k-2}}+aX^{2^{k-1}}Y^{2^{k-1}}+bX^{2^k}Y^{2^k})X}{X^3Y(1+Y)}=0,$$
that is (dividing also by $X^{2^{k-2}-2}$)
$$\frac{Y^{2^{k-2}-1}+1}{Y+1}+aX^{2^{k-1}-2^{k-2}}\frac{Y^{2^{k-1}-1}+1}{Y+1}+bX^{2^k-2^{k-2}}\frac{Y^{2^k-1}+1}{Y+1}=0.$$
Let $U=X^{2^{k-2}}$, then the above equations reads
\begin{equation}\label{Eq:U}
bU^3+aU\frac{Y^{2^{k-1}-1}+1}{Y^{2^k-1}+1}+\frac{Y^{2^{k-2}-1}+1}{Y^{2^k-1}+1}=0,
\end{equation}
which defines an irreducible curve if and only if there is no solution $\overline{U}=\frac{F(Y)}{G(Y)}\in \overline{\mathbb{F}_q}(Y)$ of Equation \eqref{Eq:U}. If $k$ is even, then $\eta$ such that $\mathbb{F}_4^*=\langle \eta\rangle$ is a pole of multiplicity one of $\frac{Y^{2^{k-1}-1}+1}{Y^{2^k-1}+1}$ and it is not a pole nor a zero of $\frac{Y^{2^{k-2}-1}+1}{Y^{2^k-1}+1}$, a contradiction since the valuation of $U$ in the corresponding place of must be an integer. If $k$ is odd all the places corresponding to roots of $Y^{2^k-1}+1$ are not poles of $\overline{U}$ (same argument as above). All the other places are not poles of $\frac{Y^{2^{k-1}-1}+1}{Y^{2^k-1}+1}$  nor  of $\frac{Y^{2^{k-2}-1}+1}{Y^{2^k-1}+1}$ and therefore they are not poles of $\overline{U}$. This means that the unique pole of a solution $\overline{U}$ is $\infty$ and it has multiplicity $2^{k-2}$, that is $G(Y)$ is a constant and $F(Y)$ has degree $2^{k-2}$. This clearly gives a contradiction.  So  Equation \eqref{Eq:U} has no solution in $\overline{\mathbb{F}_q}(Y)$ and so it defines an absolutely irreducible $\mathbb{F}_{2^n}$-rational curve. The claim follows.
\endproof

\endproof

\end{document}